\font\Bbb=msbm10 at 12pt
\newcommand{\R}{\mbox{\Bbb R}}
\newcommand{\N}{\mbox{\Bbb N}}
\theoremstyle{plain}
\newtheorem{theorem}{Theorem}
\newtheorem{cor}[theorem]{Corollary}
\newtheorem{lemma}[theorem]{Lemma}
\theoremstyle{definition}
\newtheorem{definition}{Definition}
\theoremstyle{remark}
\newtheorem{remark}{Remark}
\newcommand{\tx}[1]{\quad\mbox{ #1 }\quad}
\numberwithin{equation}{section}
\numberwithin{theorem}{section}
\numberwithin{definition}{section}
\numberwithin{remark}{section}
\begin{document}

\title[fractional taylor's formula]{taylor's formula and integral inequalities for conformable fractional derivatives}

\author[D. R. Anderson]{Douglas~R.~Anderson}
\address{Concordia College, Department of Mathematics, Moorhead, MN 56562 USA}
\email{andersod@cord.edu}

\keywords{Cauchy Function, Variation of Constants, Taylor's Formula} 
\subjclass[2010]{26A33}

\begin{abstract} 
We derive Taylor's Formula for conformable fractional derivatives. This is then employed to extend some recent and classical integral inequalities to the conformable fractional calculus, including the inequalities of Steffensen, $\breve{\text{C}}$eby$\breve{\text{s}}$ev, Hermite-Hadamard, Ostrowski, and Gr\"{u}ss.
\end{abstract}

\maketitle\thispagestyle{empty}


\section{Taylor Theorem}
We use the conformable $\alpha$-fractional derivative, recently introduced in \cite{hammad,khalil}, which for $\alpha\in(0,1]$ is given by
\begin{equation}\label{derivdef}
 D_{*}f(t):=\lim_{\varepsilon\rightarrow 0}\frac{f(t+\varepsilon t^{1-*})-f(t)}{\varepsilon}, \quad D_{*}f(0)=\lim_{t\rightarrow 0^+}D_{*}f(t).  
\end{equation}
Note that if $f$ is differentiable, then
\begin{equation}\label{fracshort}
 D_{\alpha}f(t) = t^{1-\alpha} f'(t), 
\end{equation}
where $f'(t)=\lim_{\varepsilon\rightarrow 0}[f(t+\varepsilon)-f(t)]/\varepsilon$.

We will consider Taylor's Theorem in the context of iterated fractional differential equations. In this setting, the theorem will be proven using the variation of constants formula, where we use an approach similar to that used for integer-order derivatives found in \cite{kp}.  With this in mind, we begin this note with a general higher-order equation. For $n\in\N_0$ and continuous functions $p_i:[0,\infty)\to\R$, $1\le i\le n$, we consider the higher-order linear $\alpha$-fractional differential equation\index{higher order linear equation}
\begin{equation}\label{13}
  Ly=0,\tx{where}Ly = D^n_{\alpha}y+\sum_{i=1}^n p_i D^{n-i}_{\alpha}y,
\end{equation}
where $D^n_{\alpha}y=D^{n-1}_{\alpha}(D_\alpha y)$. 
A function $y:[0,\infty)\to\R$ is a solution of equation \eqref{13} on $[0,\infty)$ provided $y$ is $n$ times $\alpha$-fractional differentiable
on $[0,\infty)$ and satisfies $Ly(t)=0$ for all $t\in[0,\infty)$. It follows that $D^n_{\alpha}y$ is a continuous function on $[0,\infty)$.

Now let $f:[0,\infty)\to\R$ be continuous and consider the non-homogeneous equation
\begin{equation}\label{78}
  D^n_{\alpha}(t)+\sum_{i=1}^np_i(t)D^{n-i}_{\alpha}y(t)=f(t).
\end{equation}

\begin{definition} 
	\index{Cauchy function!higher order equation}
We define the Cauchy function $y:[0,\infty)\times[0,\infty)\to\R$ for the linear fractional equation \eqref{13} to be, for each fixed $s\in[0,\infty)$, the solution of the initial value problem 
$$ Ly=0,\quad D^i_{\alpha}y(s,s)=0,\quad 0\le i\le n-2,\quad  D^{n-1}_{\alpha}y(s,s)=1. $$
\end{definition}

\begin{remark}
Note that 
$$ y(t,s):= \frac{1}{(n-1)!}\left(\frac{t^\alpha-s^\alpha}{\alpha}\right)^{n-1} $$ 
is the Cauchy function for $D^n_{\alpha}=0$, which can be easily verified using \eqref{fracshort}.
\end{remark}


\begin{theorem}[Variation of Constants]\label{t722}
	\index{variation of parameters!higher order}
Let $\alpha\in(0,1]$ and $s,t\in[0,\infty)$.  If $f$ is continuous, then the solution of the initial value problem
$$ Ly=f(t),\quad D^i_{\alpha}y(s)=0,\quad 0\le i\le n-1 $$  
is given by
$$ y(t) = \int_{s}^t y(t,\tau)f(\tau)\tau^{\alpha-1} d\tau, $$
where $y(t,\tau)$ is the Cauchy function for \eqref{13}.
\end{theorem}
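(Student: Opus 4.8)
The plan is to take the proposed formula as the \emph{definition} of $y$, namely $y(t):=\int_s^t y(t,\tau)f(\tau)\tau^{\alpha-1}\,d\tau$, and then to verify directly that this $y$ satisfies both the nonhomogeneous equation $Ly=f$ and the $n$ initial conditions. The engine driving everything is a Leibniz-type rule for differentiating an integral of this form under $D_\alpha$. Using \eqref{fracshort}, for a smooth kernel $g(t,\tau)$ set $z(t)=\int_s^t g(t,\tau)\tau^{\alpha-1}\,d\tau$; the ordinary Leibniz integral rule gives $z'(t)=g(t,t)t^{\alpha-1}+\int_s^t \partial_t g(t,\tau)\tau^{\alpha-1}\,d\tau$, and multiplying by $t^{1-\alpha}$ yields
$$ D_\alpha z(t)=g(t,t)+\int_s^t D_\alpha g(t,\tau)\,\tau^{\alpha-1}\,d\tau, $$
where $D_\alpha$ inside the integral acts on the first variable. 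This is the one computation I would isolate as a preliminary lemma, because every subsequent step is an application of it.

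First I would apply this rule repeatedly to $y(t)$. At the $k$-th differentiation the boundary term contributed is $D_\alpha^{k-1}y(t,t)\,f(t)$, the $(k-1)$-st conformable $t$-derivative of the Cauchy kernel evaluated on the diagonal. The defining conditions $D_\alpha^i y(s,s)=0$ for $0\le i\le n-2$ make these boundary terms vanish through the $(n-1)$-st differentiation, so by induction one obtains $D_\alpha^k y(t)=\int_s^t D_\alpha^k y(t,\tau)\,f(\tau)\tau^{\alpha-1}\,d\tau$ for $1\le k\le n-1$. The $n$-th differentiation is where the remaining condition $D_\alpha^{n-1}y(s,s)=1$ finally activates, producing the boundary term $f(t)$ and hence $D_\alpha^n y(t)=f(t)+\int_s^t D_\alpha^n y(t,\tau)\,f(\tau)\tau^{\alpha-1}\,d\tau$.

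With these formulas in hand the two required properties fall out. Evaluating any $D_\alpha^k y$ with $0\le k\le n-1$ at $t=s$ collapses the integral $\int_s^s(\cdots)=0$, giving the initial conditions $D_\alpha^i y(s)=0$. For the equation itself I would substitute the derived expressions into $Ly=D_\alpha^n y+\sum_{i=1}^n p_i D_\alpha^{n-i}y$ and combine the integrals, so that
$$ Ly(t)=f(t)+\int_s^t\Big[D_\alpha^n y(t,\tau)+\sum_{i=1}^n p_i(t)D_\alpha^{n-i}y(t,\tau)\Big]f(\tau)\tau^{\alpha-1}\,d\tau; $$
the bracketed quantity is precisely $L$ applied in the $t$-variable to the Cauchy function $y(\cdot,\tau)$, which vanishes identically because by definition $y(t,\tau)$ solves $Ly=0$. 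Thus the integral drops out and $Ly=f$.

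I expect the main obstacle to be a rigorous justification of the Leibniz rule rather than the algebra that follows. One must know that the kernel $y(t,\tau)$ and its $t$-derivatives up to order $n$ are jointly continuous in $(t,\tau)$, so that differentiation under the integral sign is legitimate and the diagonal evaluations $D_\alpha^k y(t,t)$ are meaningful; continuity of the coefficients $p_i$ together with standard existence and uniqueness for the Cauchy initial value problem should supply this. A secondary point worth recording is the degenerate case $n=1$, where the list of vanishing conditions is empty and the very first boundary term is already $f(t)$; the bookkeeping above specializes correctly, but I would note it explicitly.
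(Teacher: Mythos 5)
Your proposal is correct and follows essentially the same route as the paper: define $y$ by the integral formula, differentiate under the integral sign so that each application of $D_\alpha$ produces a diagonal boundary term $D_\alpha^{k-1}y(t,t)f(t)$ which vanishes for $k\le n-1$ by the Cauchy function's initial conditions and equals $f(t)$ at the $n$-th step, then read off the initial conditions at $t=s$ and conclude $Ly=f$ since $L$ annihilates the kernel. Your explicit isolation of the conformable Leibniz rule and the regularity needed to justify it is a more careful rendering of what the paper compresses into ``by the properties of the Cauchy function,'' but it is the same argument.
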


\begin{proof} 
With $y$ defined as above and by the properties of the Cauchy function we have
$$ D^i_{\alpha}y(t)
   =\int_{s}^t D^i_{\alpha}y(t,\tau)f(\tau)\tau^{\alpha-1}d\tau + D^{i-1}_{\alpha}y(t,t)f(t)
   =\int_{s}^t D^i_{\alpha}y(t,\tau)f(\tau)\tau^{\alpha-1}d\tau $$
for $0\le i\le n-1$, and
\begin{eqnarray*} 
  D^n_{\alpha}y(t)
   &=& \int_{s}^t D^n_{\alpha}y(t,\tau)f(\tau)\tau^{\alpha-1}d\tau + D^{n-1}_{\alpha}y(t,t)f(t) \\
   &=& \int_{s}^t D^n_{\alpha}y(t,\tau)f(\tau)\tau^{\alpha-1}d\tau+f(t). 
\end{eqnarray*}
It follows from these equations that 
$$ D^{i}_{\alpha}y(s)=0, \quad 0\le i\le n-1 $$
and
$$ Ly(t)=\int_{s}^t Ly(t,\tau)f(\tau)\tau^{\alpha-1}d\tau+f(t)=f(t), $$
and the proof is complete.
\end{proof}


\begin{theorem}[Taylor Formula]\label{t144}
Let $\alpha\in(0,1]$ and $n\in\N$. Suppose $f$ is $(n+1)$ times $\alpha$-fractional differentiable on $[0,\infty)$, and $s,t\in[0,\infty)$. Then we have
$$  f(t) = \sum_{k=0}^n  \frac{1}{k!}\left(\frac{t^\alpha-s^\alpha}{\alpha}\right)^{k} D^{k}_{\alpha}f(s)
         +\frac{1}{n!} \int_{s}^t \left(\frac{t^\alpha-\tau^\alpha}{\alpha}\right)^n D^{n+1}_{\alpha}f(\tau)\tau^{\alpha-1}d\tau $$
\end{theorem}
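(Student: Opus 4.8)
The plan is to realize Taylor's formula as a single instance of the variation of constants formula (Theorem \ref{t722}) applied to the pure $(n+1)$-st order operator $L=D^{n+1}_\alpha$, i.e.\ with all coefficients $p_i\equiv 0$. The strategy is to exhibit both sides of the claimed identity as solutions of one and the same initial value problem, and then to conclude equality by uniqueness.

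First I would record a power rule for the conformable monomials. Writing $x:=(t^\alpha-s^\alpha)/\alpha$ and using \eqref{fracshort}, a direct computation gives
$$ D_\alpha\left(\frac{t^\alpha-s^\alpha}{\alpha}\right)^k = k\left(\frac{t^\alpha-s^\alpha}{\alpha}\right)^{k-1}, $$
so that $D_\alpha$ acts on $x$ exactly as an ordinary derivative acts on a variable, with $D_\alpha x=1$. Iterating, $D^j_\alpha\left[\frac{1}{k!}x^k\right]=\frac{1}{(k-j)!}x^{k-j}$ for $j\le k$, and the expression vanishes for $j>k$. Evaluating at $t=s$, where $x=0$, shows that the polynomial part
$$ P(t):=\sum_{k=0}^n \frac{1}{k!}\left(\frac{t^\alpha-s^\alpha}{\alpha}\right)^k D^k_\alpha f(s) $$
satisfies $D^i_\alpha P(s)=D^i_\alpha f(s)$ for $0\le i\le n$, while $D^{n+1}_\alpha P\equiv 0$ since $P$ has degree $n$ in $x$.

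Next I would treat the remainder. Because $f$ is $(n+1)$ times $\alpha$-fractional differentiable, $g:=D^{n+1}_\alpha f$ is continuous, and the Cauchy function for $D^{n+1}_\alpha=0$ is $\frac{1}{n!}\left(\frac{t^\alpha-\tau^\alpha}{\alpha}\right)^n$ by the Remark (with $n$ there replaced by $n+1$). Theorem \ref{t722} then identifies the integral remainder
$$ R(t):=\frac{1}{n!}\int_s^t \left(\frac{t^\alpha-\tau^\alpha}{\alpha}\right)^n g(\tau)\tau^{\alpha-1}d\tau $$
as the solution of $D^{n+1}_\alpha R=g$ subject to $D^i_\alpha R(s)=0$ for $0\le i\le n$.

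Combining, set $y:=P+R$. Then $D^{n+1}_\alpha y = 0 + g = D^{n+1}_\alpha f$ and $D^i_\alpha y(s)=D^i_\alpha f(s)$ for $0\le i\le n$, so $w:=f-y$ satisfies $D^{n+1}_\alpha w\equiv 0$ together with $D^i_\alpha w(s)=0$ for $0\le i\le n$. I expect the one genuinely new point to be the uniqueness conclusion $w\equiv 0$; this follows by descending induction, since $D_\alpha u\equiv 0$ forces $u'\equiv 0$ on $(0,\infty)$ and hence $u$ constant. Indeed $D^{n+1}_\alpha w\equiv 0$ gives $D^n_\alpha w\equiv D^n_\alpha w(s)=0$, and repeating lowers the order until $w\equiv 0$. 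Therefore $f=P+R$, which is precisely the asserted formula.
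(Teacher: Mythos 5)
Your proposal is correct and follows essentially the same route as the paper: identify the integral remainder via the variation of constants formula (Theorem \ref{t722}) for $D^{n+1}_{\alpha}$, check that the polynomial part matches the initial data $D^{i}_{\alpha}f(s)$ for $0\le i\le n$, and conclude by uniqueness of solutions to the initial value problem. The only difference is cosmetic --- you subtract the candidate from $f$ and prove uniqueness by descending induction, whereas the paper compares the polynomial against the homogeneous solution $u$ and cites uniqueness outright; your sketch of why $D_{\alpha}u\equiv 0$ forces $u$ constant is a welcome detail the paper leaves implicit.
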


\begin{proof}
Let $g(t):=D^{n+1}_{\alpha}f(t)$.  Then $f$ solves the initial value problem
$$ D^{n+1}_{\alpha}x = g, \quad D^{k}_{\alpha}x(s)=D^{k}_{\alpha}f(s), \quad 0\le k\le n. $$
Note that the Cauchy function for $D^{n+1}_{\alpha}y=0$ is 
\[ y(t,s) = \frac{1}{n!}\left(\frac{t^\alpha-s^\alpha}{\alpha}\right)^{n}. \]
By the variation of constants formula, 
$$ f(t)=u(t) + \frac{1}{n!}\int_{s}^t \left(\frac{t^\alpha-\tau^\alpha}{\alpha}\right)^{n}g(\tau)\tau^{\alpha-1}d\tau,  $$
where $u$ solves the initial value problem 
\begin{equation}\label{uivp}
   D^{n+1}_{\alpha}u=0, \quad D^{m}_{\alpha}u(s) = D^{m}_{\alpha}f(s), \quad 0\le m \le n. 
\end{equation}
To validate the claim that $u(t)=\sum_{k=0}^n  \frac{1}{k!}\left(\frac{t^\alpha-s^\alpha}{\alpha}\right)^{k} D^{k}_{\alpha}f(s)$, set
$$ w(t):=\sum_{k=0}^n  \frac{1}{k!}\left(\frac{t^\alpha-s^\alpha}{\alpha}\right)^{k} D^{k}_{\alpha}f(s). $$
Then $D^{n+1}_{\alpha}w=0$, and we have that
$$ D^{m}_{\alpha}w(t) = \sum_{k=m}^n \frac{1}{(k-m)!}\left(\frac{t^\alpha-s^\alpha}{\alpha}\right)^{k-m} D^{k}_{\alpha}f(s). $$
It follows that 
$$ D^{m}_{\alpha}w(s) = \sum_{k=m}^n \frac{1}{(k-m)!}\left(\frac{s^\alpha-s^\alpha}{\alpha}\right)^{k-m} D^{k}_{\alpha}f(s) =  D^{m}_{\alpha}f(s) $$
for $0\le m\le n$.  We consequently have that $w$ also solves \eqref{uivp}, and thus $u\equiv w$ 
by uniqueness.
\end{proof}


\begin{cor}
Let $\alpha\in(0,1]$ and $s,r\in[0,\infty)$ be fixed. For any $t\in[0,\infty)$ and any positive integer $n$, 
$$ \frac{1}{n!}\left(\frac{t^\alpha-r^\alpha}{\alpha}\right)^{n} = \sum_{k=0}^n \frac{1}{k!(n-k)!}\left(\frac{t^\alpha-s^\alpha}{\alpha}\right)^{k} \left(\frac{s^\alpha-r^\alpha}{\alpha}\right)^{n-k}. $$
\end{cor}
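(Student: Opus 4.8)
The plan is to recognize the claimed identity as an instance of Taylor's Formula (Theorem~\ref{t144}) applied to a suitable power function, with the fixed point $r$ playing the role of the base of the power and the fixed point $s$ playing the role of the expansion point. Concretely, I would fix $r$ and define
$$ f(t) := \frac{1}{n!}\left(\frac{t^\alpha-r^\alpha}{\alpha}\right)^{n}, $$
so that the left-hand side of the asserted identity is precisely $f(t)$. The goal then reduces to showing that the right-hand side is exactly the Taylor expansion of this $f$ about $s$, with vanishing remainder.

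First I would compute the iterated $\alpha$-fractional derivatives of $f$. Using the short form \eqref{fracshort}, namely $D_{\alpha}g(t)=t^{1-\alpha}g'(t)$, a single differentiation of a power $\left(\frac{t^\alpha-r^\alpha}{\alpha}\right)^{m}$ produces a factor $m$, lowers the exponent by one, and the chain-rule factor $t^{\alpha-1}$ cancels against the prefactor $t^{1-\alpha}$; hence $D_{\alpha}$ simply shifts the exponent $m\mapsto m-1$. Iterating this gives
$$ D^{k}_{\alpha}f(t) = \frac{1}{(n-k)!}\left(\frac{t^\alpha-r^\alpha}{\alpha}\right)^{n-k}, \qquad 0\le k\le n, $$
and in particular $D^{n+1}_{\alpha}f\equiv 0$ on $[0,\infty)$. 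This is the one computational step, and it is routine; I expect no genuine obstacle here beyond bookkeeping of the falling factorials.

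Finally I would apply Theorem~\ref{t144} to $f$, expanding about $s$. Because $D^{n+1}_{\alpha}f\equiv 0$, the integral remainder term vanishes identically, leaving
$$ f(t)=\sum_{k=0}^n \frac{1}{k!}\left(\frac{t^\alpha-s^\alpha}{\alpha}\right)^{k} D^{k}_{\alpha}f(s). $$
Substituting the evaluated derivatives $D^{k}_{\alpha}f(s)=\frac{1}{(n-k)!}\left(\frac{s^\alpha-r^\alpha}{\alpha}\right)^{n-k}$ and recalling that the left-hand side is $f(t)=\frac{1}{n!}\left(\frac{t^\alpha-r^\alpha}{\alpha}\right)^{n}$ yields the claim at once. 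As an independent check, the identity is nothing more than the binomial theorem: writing $x=\frac{t^\alpha-s^\alpha}{\alpha}$ and $y=\frac{s^\alpha-r^\alpha}{\alpha}$ one has $x+y=\frac{t^\alpha-r^\alpha}{\alpha}$, so both sides equal $\frac{1}{n!}(x+y)^{n}$. The Taylor-based route is the natural one to present here, since it exhibits the result as a genuine corollary of Theorem~\ref{t144}.
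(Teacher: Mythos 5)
Your proposal is correct and follows exactly the paper's route: the paper's proof consists of the single remark that the identity "follows immediately from the theorem if we take $f(t)=\frac{1}{n!}\left(\frac{t^\alpha-r^\alpha}{\alpha}\right)^{n}$ in Taylor's formula," and you have simply filled in the computation of $D^k_\alpha f$ and the vanishing of the remainder. Your closing observation that the identity is the binomial theorem in disguise matches the paper's aside that "it can also be shown directly."
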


\begin{proof}
This follows immediately from the theorem if we take $f(t) = \frac{1}{n!}\left(\frac{t^\alpha-r^\alpha}{\alpha}\right)^{n}$ in Taylor's formula. It can also be shown directly.
\end{proof}


\section{steffensen inequality}

We begin this section with a definition of  $\alpha$-fractional integrability. The results in this and subsequent sections differ from those in \cite{pps}.

\begin{definition} 
	\index{$\alpha$-fractional integrable}
Let $\alpha\in(0,1]$ and $0 \le a < b$. A function $f:[a,b]\rightarrow\R$ is $\alpha$-fractional integrable on $[a,b]$ if the integral
$$ \int_a^b f(t) d_{\alpha}t := \int_a^b f(t)t^{\alpha-1}dt $$
exists and is finite.
\end{definition}


\begin{lemma}
Let $\alpha\in(0,1]$ and $a,b\in\R$ with $0 \le a < b$. Let $g:[a,b]\rightarrow[0,1]$ be an $\alpha$-fractional integrable function on $[a,b]$, and define 
\begin{equation}\label{elldef}
 \ell:=\frac{\alpha(b-a)}{b^{\alpha}-a^{\alpha}}\int_{a}^{b} g(t) d_{\alpha}t \in[0, b-a].
\end{equation}
Then
\begin{equation}\label{gbds}
 \int_{b-\ell}^{b} 1\; d_{\alpha}t \le \int_{a}^{b} g(t)\; d_{\alpha}t \le \int_{a}^{a+\ell} 1\; d_{\alpha}t.
\end{equation}
\end{lemma}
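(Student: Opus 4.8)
The plan is to reduce both halves of \eqref{gbds} to the single elementary fact that $x\mapsto x^\alpha$ is concave on $[0,\infty)$ for $\alpha\in(0,1]$, via a comparison between a curve and its chord.

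First I would abbreviate $G:=\int_a^b g(t)\,d_{\alpha}t$ and record the two crude bounds that make the definition of $\ell$ legitimate. Since $0\le g\le 1$ pointwise and $t^{\alpha-1}\ge 0$, integrating against $t^{\alpha-1}$ gives $0\le G\le \int_a^b t^{\alpha-1}\,dt=(b^\alpha-a^\alpha)/\alpha$; substituting this into \eqref{elldef} confirms the asserted range $\ell\in[0,b-a]$ and, more usefully, rearranges to the key identity $G=\dfrac{\ell(b^\alpha-a^\alpha)}{\alpha(b-a)}$.

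Next I would evaluate the two flanking integrals outright, namely $\int_{b-\ell}^{b}1\,d_{\alpha}t=\big(b^\alpha-(b-\ell)^\alpha\big)/\alpha$ and $\int_a^{a+\ell}1\,d_{\alpha}t=\big((a+\ell)^\alpha-a^\alpha\big)/\alpha$. Clearing the common factor $1/\alpha$ and inserting the identity for $G$, the double inequality \eqref{gbds} becomes precisely
$$ b^\alpha-(b-\ell)^\alpha \;\le\; \frac{\ell(b^\alpha-a^\alpha)}{b-a} \;\le\; (a+\ell)^\alpha-a^\alpha. $$
The core is then a chord comparison on $\ell\in[0,b-a]$. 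For the right inequality I set $\phi(\ell):=(a+\ell)^\alpha-a^\alpha$, which is concave because $x^\alpha$ is concave; since $\phi$ agrees with the linear chord $L(\ell):=\frac{\ell}{b-a}(b^\alpha-a^\alpha)$ at $\ell=0$ and $\ell=b-a$, concavity forces $\phi\ge L$ on the whole interval. For the left inequality I set $\psi(\ell):=b^\alpha-(b-\ell)^\alpha$; a direct computation gives $\psi''(\ell)=\alpha(1-\alpha)(b-\ell)^{\alpha-2}\ge 0$, so $\psi$ is convex and likewise matches $L$ at both endpoints, whence $\psi\le L$. Together these are exactly the required estimates.

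I expect no deep obstacle here—the difficulty is purely bookkeeping: aligning the direction of each inequality with the correct curvature (the concave $\phi$ lies above its chord, the convex $\psi$ below it) and checking that both auxiliary functions share the endpoint values of $L$. As a sanity check, at $\alpha=1$ all three quantities collapse to $\ell$, so \eqref{gbds} becomes an equality, recovering the classical form underlying Steffensen's lemma and indicating that the inequalities are strict for $\alpha<1$ when $\ell\in(0,b-a)$.
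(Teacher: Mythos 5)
Your argument is correct. It differs from the paper's in the mechanism, though the two are essentially dual to one another. The paper never evaluates $(b-\ell)^\alpha$ or $(a+\ell)^\alpha$: it observes that the weight $t^{\alpha-1}$ is decreasing on $[a,b]$, so its average over the right-hand subinterval $[b-\ell,b]$ is at most its average over $[a,b]$, which in turn is at most its average over the left-hand subinterval $[a,a+\ell]$; multiplying this chain of average-value inequalities by $\ell$ and using the identity $\frac{\ell}{b-a}\int_a^b 1\,d_\alpha t=\int_a^b g\,d_\alpha t$ (your ``key identity'') gives \eqref{gbds} in two lines. You instead compute the flanking integrals in closed form and reduce the claim to the chord comparison
$$ b^\alpha-(b-\ell)^\alpha \;\le\; \frac{\ell\,(b^\alpha-a^\alpha)}{b-a} \;\le\; (a+\ell)^\alpha-a^\alpha, $$
settled by concavity of $x^\alpha$ (equivalently convexity of $\psi$). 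These are the same fact in two guises --- the statement that averages of a decreasing function over initial (resp.\ terminal) segments dominate (resp.\ are dominated by) the global average is exactly the statement that the antiderivative is concave and hence lies above its chords --- but your version is more computational and yields, as a bonus, the explicit equality cases ($\alpha=1$, or $\ell\in\{0,b-a\}$) and strictness for $\alpha<1$ in the interior, while the paper's version is shorter and would generalize verbatim to any decreasing nonnegative weight in place of $t^{\alpha-1}$. One small point of care in your route: the second-derivative computations for $\phi$ and $\psi$ degenerate at an endpoint when $a=0$, so it is cleaner to cite concavity of $x\mapsto x^\alpha$ on $[0,\infty)$ directly (as you do for $\phi$) rather than rely on $\psi''$ existing everywhere; this is cosmetic and does not affect correctness.
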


\begin{proof}
Since $g(t)\in[0,1]$ for all $t\in[a,b]$, $\ell$ given in \eqref{elldef} satisfies 
\[ 0\le \ell = \frac{\alpha(b-a)}{b^{\alpha}-a^{\alpha}}\int_{a}^{b} g(t) d_{\alpha}t \le \frac{\alpha(b-a)}{b^{\alpha}-a^{\alpha}}\int_{a}^{b} 1\; d_{\alpha}t =  \frac{\alpha(b-a)}{b^{\alpha}-a^{\alpha}}\frac{b^{\alpha}-a^{\alpha}}{\alpha} = b-a. \]
As $\alpha\in(0,1]$ we have that $t^{\alpha-1}$ is a decreasing function on $[a,b]$, or $(a,b]$ if $a=0$. Thus using the fact that $d_{\alpha}t=t^{\alpha-1}dt$ we have the following inequalities, which are average values, namely
\[ \frac{1}{\ell} \int_{b-\ell}^{b} 1\; d_{\alpha}t \le \frac{1}{b-a}\int_a^b 1\; d_{\alpha}t \le \frac{1}{\ell} \int_a^{a+\ell} 1\; d_{\alpha}t. \] 
This implies that
\[  \int_{b-\ell}^{b} 1\; d_{\alpha}t \le \frac{\ell}{b-a}\int_a^b 1\; d_{\alpha}t \le \int_a^{a+\ell} 1\; d_{\alpha}t, \]
which leads to \eqref{gbds} via \eqref{elldef}.
\end{proof}


\begin{theorem}[Fractional Steffensen Inequality]\label{t4}
Let $\alpha\in(0,1]$ and $a,b\in\R$ with $0 \le a < b$. Let $f:[a,b]\rightarrow[0,\infty)$ and $g:[a,b]\rightarrow[0,1]$ be $\alpha$-fractional integrable functions on $[a,b]$, with $f$ decreasing. Then
\begin{equation}\label{steff}
  \int_{b-\ell}^b f(t)d_{\alpha}t \le \int_a^b f(t)g(t)d_{\alpha}t \le \int_a^{a+\ell} f(t)d_{\alpha}t,
\end{equation}
where $\ell$ is given by \eqref{elldef}. 
\end{theorem}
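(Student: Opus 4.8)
The plan is to follow the architecture of the classical Steffensen proof, but to carry everything out with respect to the $\alpha$-fractional integral $d_{\alpha}t=t^{\alpha-1}dt$ and to replace the exact identity $\ell=\int_a^b g\,dt$ available in the integer-order case by the two-sided bound furnished by the preceding Lemma. Since both halves of \eqref{steff} are established by the same device, I would prove the upper estimate first and then indicate the mirror-image modifications needed for the lower estimate.

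For the right-hand inequality I would form the difference $\int_a^{a+\ell} f\,d_{\alpha}t-\int_a^b fg\,d_{\alpha}t$ and split the second integral at $a+\ell$, rewriting the difference as $\int_a^{a+\ell} f(1-g)\,d_{\alpha}t-\int_{a+\ell}^b fg\,d_{\alpha}t$. Because $f$ is decreasing I would pivot on the value $f(a+\ell)$: on $[a,a+\ell]$ one has $f(t)\ge f(a+\ell)$ together with $1-g\ge 0$, while on $[a+\ell,b]$ one has $f(t)\le f(a+\ell)$ together with $g\ge 0$. These two comparisons bound the difference below by $f(a+\ell)\left[\int_a^{a+\ell} 1\,d_{\alpha}t-\int_a^b g\,d_{\alpha}t\right]$. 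The bracket is nonnegative by the right-hand estimate of the Lemma, and $f(a+\ell)\ge 0$ since $f$ maps into $[0,\infty)$, so the product is nonnegative and the upper bound follows.

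The left-hand inequality would be handled symmetrically: I would form $\int_a^b fg\,d_{\alpha}t-\int_{b-\ell}^b f\,d_{\alpha}t$, split the first integral at $b-\ell$ to obtain $\int_a^{b-\ell} fg\,d_{\alpha}t-\int_{b-\ell}^b f(1-g)\,d_{\alpha}t$, and pivot on $f(b-\ell)$ using monotonicity. This reduces the difference to $f(b-\ell)\left[\int_a^b g\,d_{\alpha}t-\int_{b-\ell}^b 1\,d_{\alpha}t\right]$, whose bracket is nonnegative by the left-hand estimate of the Lemma, again combined with $f(b-\ell)\ge 0$.

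The essential point, and the only place where the fractional setting genuinely departs from the classical one, is that $\ell$ as defined in \eqref{elldef} is no longer literally $\int_a^b g\,d_{\alpha}t$; the weight $t^{\alpha-1}$ destroys the clean cancellation that makes the integer-order bracket vanish identically. The entire purpose of the Lemma is to convert that lost equality into the one-sided estimates $\int_{b-\ell}^b 1\,d_{\alpha}t\le\int_a^b g\,d_{\alpha}t\le\int_a^{a+\ell} 1\,d_{\alpha}t$, which are exactly what keep each bracket of the correct sign. I expect no other real obstacle: once this substitution is in place, the monotonicity comparisons and the sign condition $f\ge 0$ are routine.
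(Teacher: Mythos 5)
Your proposal is correct and follows essentially the same route as the paper: split the integral at $b-\ell$ (resp.\ $a+\ell$), compare $f$ against its value at the split point using monotonicity and $g\in[0,1]$, and invoke the lemma's bounds \eqref{gbds} together with $f\ge 0$ to show the resulting bracket is nonnegative. The only difference is cosmetic — you apply the pivot to both pieces at once and collect a single bracket $f(b-\ell)\bigl[\int_a^b g\,d_{\alpha}t-\int_{b-\ell}^b 1\,d_{\alpha}t\bigr]$, whereas the paper interleaves the lemma between the two comparisons and finishes with $\int_a^{b-\ell}(f(t)-f(b-\ell))g(t)\,d_{\alpha}t\ge 0$ — and you correctly identify that replacing the classical identity $\ell=\int_a^b g\,dt$ by the one-sided estimates of the lemma is the crux of the fractional case.
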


\begin{proof}
We will prove only the case in \eqref{steff} for the left inequality; the proof for the right inequality is similar, and relies on \eqref{gbds}. 

By the definition of $\ell$ in \eqref{elldef} and the conditions on $g$, we know that \eqref{gbds} holds. After subtracting within the left inequality of \eqref{steff}, we see that
\begin{eqnarray*}
 \int_a^b f(t)g(t)d_{\alpha}t 
   &-&   \int_{b-\ell}^b f(t)d_{\alpha}t \\
   &=&   \int_a^{b-\ell} f(t)g(t)d_{\alpha}t + \int_{b-\ell}^b f(t)g(t)d_{\alpha}t - \int_{b-\ell}^b f(t)d_{\alpha}t \\
   &=&   \int_a^{b-\ell} f(t)g(t)d_{\alpha}t - \int_{b-\ell}^b f(t)(1-g(t))d_{\alpha}t \\
   &\ge& \int_a^{b-\ell} f(t)g(t)d_{\alpha}t - f(b-\ell)\int_{b-\ell}^b (1-g(t))d_{\alpha}t \\
   &=&   \int_a^{b-\ell} f(t)g(t)d_{\alpha}t - f(b-\ell)\left[\int_{b-\ell}^b 1\;d_{\alpha}t-\int_{b-\ell}^b g(t)d_{\alpha}t\right] \\ 
   &\stackrel{\eqref{gbds}}{\ge}& \int_a^{b-\ell} f(t)g(t)d_{\alpha}t - f(b-\ell)\int_a^{b-\ell} g(t)d_{\alpha}t  \\
   &=&   \int_a^{b-\ell} \left(f(t)-f(b-\ell)\right)g(t)d_{\alpha}t \ge 0,
\end{eqnarray*}
since $f$ is decreasing and $g$ is nonnegative. Therefore the left-hand side of \eqref{steff} holds.
\end{proof}


\begin{remark}
The requirement in Steffensen's Theorem \ref{t4} that $f$ be non-negative is essential. For example, let $a=0$, $b=1$, $\alpha=1/2\equiv g$, and $f\equiv -1$. Then $\ell=1/2$, and
\[ \int_{b-\ell}^b f(t)d_{\alpha}t =-2+\sqrt{2} > -1 = \int_a^b f(t)g(t)d_{\alpha}t, \]
a contradiction of the left-hand side of \eqref{steff}.
\end{remark}


\section{taylor remainder}

Let $\alpha\in(0,1]$ and suppose $f$ is $n+1$ times $\alpha$-fractional differentiable on $[0,\infty)$. Using Taylor's Theorem, Theorem \ref{t144}, we define the remainder function 
by 
$$ R_{-1,f}(\cdot,s):=f(s), $$ 
and for $n>-1$,
\begin{eqnarray}
 R_{n,f}(t,s) &:=& f(s) - \sum_{k=0}^n \frac{D^{k}_{\alpha}f(t)}{k!}\left(\frac{s^\alpha-t^\alpha}{\alpha}\right)^{k} \label{remain} \\
              &= & \frac{1}{n!} \int_{t}^s \left(\frac{s^\alpha-\tau^\alpha}{\alpha}\right)^n D^{n+1}_{\alpha}f(\tau)d_{\alpha}\tau. \nonumber
\end{eqnarray}


\begin{lemma}\label{l6}
Let $\alpha\in(0,1]$. The following identity involving $\alpha$-fractional Taylor's remainder holds:
$$ \int_a^b \frac{D^{n+1}_{\alpha}f(s)}{(n+1)!}\left(\frac{t^\alpha-s^\alpha}{\alpha}\right)^{n+1} d_{\alpha}s = \int_a^t R_{n,f}(a,s)d_{\alpha}s + \int_t^b R_{n,f}(b,s)d_{\alpha}s. $$
\end{lemma}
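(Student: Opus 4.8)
The plan is to substitute the integral representation of the Taylor remainder from \eqref{remain} into the right-hand side, convert the resulting iterated integrals by interchanging the order of integration, and then evaluate the inner integrals explicitly by a single substitution. Writing $g:=D^{n+1}_{\alpha}f$ for brevity, I would first replace each remainder term using the integral form
\[ R_{n,f}(a,s)=\frac{1}{n!}\int_a^s\left(\frac{s^\alpha-\tau^\alpha}{\alpha}\right)^n g(\tau)\,d_{\alpha}\tau, \qquad R_{n,f}(b,s)=\frac{1}{n!}\int_b^s\left(\frac{s^\alpha-\tau^\alpha}{\alpha}\right)^n g(\tau)\,d_{\alpha}\tau. \]
This turns the right-hand side into two iterated $\alpha$-fractional integrals, the first over the triangular region $a\le\tau\le s\le t$ and the second over $t\le s\le\tau\le b$ (after accounting for the reversed orientation of $\int_b^s$ when $s\le b$).

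Next I would interchange the order of integration in each iterated integral, which is justified by continuity of $g$ together with $d_{\alpha}\tau=\tau^{\alpha-1}d\tau$, so that both are ordinary Riemann integrals over bounded regions. For the first integral this yields $\frac{1}{n!}\int_a^t g(\tau)\left[\int_\tau^t\left(\frac{s^\alpha-\tau^\alpha}{\alpha}\right)^n d_{\alpha}s\right]d_{\alpha}\tau$, and for the second, after tracking the sign coming from $\int_b^s=-\int_s^b$, an analogous expression with inner integral over $[t,\tau]$. In each case the inner integral is evaluated by the substitution $u=(s^\alpha-\tau^\alpha)/\alpha$, $du=s^{\alpha-1}ds=d_{\alpha}s$, producing $\frac{1}{n+1}\left(\frac{t^\alpha-\tau^\alpha}{\alpha}\right)^{n+1}$ in both cases once the signs are resolved.

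Adding the two contributions then gives $\frac{1}{(n+1)!}\int_a^b g(\tau)\left(\frac{t^\alpha-\tau^\alpha}{\alpha}\right)^{n+1}d_{\alpha}\tau$, which is precisely the left-hand side. I expect the main obstacle to be the careful bookkeeping of orientation and signs in the second remainder integral: because $s$ ranges over $[t,b]$ with $s\le b$, the integral $\int_b^s$ runs backwards and the factor $(s^\alpha-\tau^\alpha)^n$ can be negative, so the sign from reversing the limits must cancel exactly against the sign produced by the substitution in order to match the first term. Verifying that these sign reversals conspire correctly is the delicate step; the interchange of integration order and the substitution itself are routine.
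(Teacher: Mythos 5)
Your argument is correct, and the sign bookkeeping you flag as the delicate point does work out: for the second term the minus sign from reversing $\int_b^s$ to $-\int_s^b$ cancels against the minus sign produced when the inner integral $\int_t^\tau u^n\,du$ is evaluated with $u=(s^\alpha-\tau^\alpha)/\alpha$ running from $(t^\alpha-\tau^\alpha)/\alpha$ up to $0$, so both triangular regions contribute $\frac{1}{(n+1)!}\left(\frac{t^\alpha-\tau^\alpha}{\alpha}\right)^{n+1}g(\tau)$ with the same sign and sum to the left-hand side. However, your route is genuinely different from the paper's. The paper proves the identity by induction on $n$: the base case $n=-1$ is just additivity of the integral over $[a,t]\cup[t,b]$, and the inductive step integrates the left-hand side by parts once and then absorbs the resulting boundary terms into the integrands using the recursive relation $R_{k,f}(a,s)=R_{k-1,f}(a,s)-\frac{D^{k}_{\alpha}f(a)}{k!}\left(\frac{s^\alpha-a^\alpha}{\alpha}\right)^{k}$ coming from the \emph{sum} form of the remainder in \eqref{remain}. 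Your proof instead leans on the \emph{integral} form of the remainder (hence implicitly on Taylor's formula, Theorem \ref{t144}) together with an interchange of the order of integration. What you gain is a one-shot, non-inductive computation that makes the geometric content (two triangular regions reassembling into the full rectangle's worth of $\tau$-values) visible; what the paper's induction buys is that it never needs Fubini or the integral representation, only integration by parts and the algebraic recursion between successive remainders. To make your version fully rigorous you should state the hypothesis under which the interchange is justified --- continuity of $D^{n+1}_{\alpha}f$ on $[a,b]$ suffices, since then both iterated integrals are ordinary Riemann integrals of a continuous function of $(s,\tau)$ over compact triangles after writing $d_{\alpha}\tau=\tau^{\alpha-1}d\tau$ (with the usual care near $0$ when $a=0$ and $\alpha<1$).
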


\begin{proof}
We proceed by mathematical induction on $n$.  For $n=-1$, 
$$ \int_a^b D^{0}_{\alpha}f(s)d_{\alpha}s = \int_a^b f(s)d_{\alpha}s=\int_a^tf(s)d_{\alpha}s+\int_t^bf(s)d_{\alpha}s. $$
Assume the result holds for $n=k-1$:
$$ \int_a^b \frac{D^{k}_{\alpha}f(s)}{k!}\left(\frac{t^\alpha-s^\alpha}{\alpha}\right)^{k} d_{\alpha}s = \int_a^t R_{k-1,f}(a,s)d_{\alpha}s
              + \int_t^b R_{k-1,f}(b,s)d_{\alpha}s.  $$
Let $n=k$. Using integration by parts, we have
\begin{eqnarray*} 
 \int_a^b \frac{D^{k+1}_{\alpha}f(s)}{(k+1)!}\left(\frac{t^\alpha-s^\alpha}{\alpha}\right)^{k+1} d_{\alpha}s 
 &=& \frac{D^{k}_{\alpha}f(b)}{(k+1)!}\left(\frac{t^\alpha-b^\alpha}{\alpha}\right)^{k+1} \\
 & & \; - \frac{D^{k}_{\alpha}f(a)}{(k+1)!}\left(\frac{t^\alpha-a^\alpha}{\alpha}\right)^{k+1} \\
 & & \; + \int_a^b \frac{D^{k}_{\alpha}f(s)}{k!}\left(\frac{t^\alpha-s^\alpha}{\alpha}\right)^{k} d_{\alpha}s.
\end{eqnarray*}
By the induction assumption, 
\begin{eqnarray*}
 \int_a^b \frac{D^{k+1}_{\alpha}f(s)}{(k+1)!}\left(\frac{t^\alpha-s^\alpha}{\alpha}\right)^{k+1} d_{\alpha}s 
  &=& \int_a^t R_{k-1,f}(a,s)d_{\alpha}s + \int_t^b R_{k-1,f}(b,s)d_{\alpha}s \\
  & & + \frac{D^{k}_{\alpha}f(b)}{(k+1)!}\left(\frac{t^\alpha-b^\alpha}{\alpha}\right)^{k+1} \\
  & & - \frac{D^{k}_{\alpha}f(a)}{(k+1)!}\left(\frac{t^\alpha-a^\alpha}{\alpha}\right)^{k+1} \\
  &=& \int_a^t R_{k-1,f}(a,s)d_{\alpha}s + \int_t^b R_{k-1,f}(b,s)d_{\alpha}s \\
  & & + \frac{D^{k}_{\alpha}f(b)}{k!}\int_b^t \left(\frac{s^\alpha-b^\alpha}{\alpha}\right)^{k} d_{\alpha}s \\
  & & - \frac{D^{k}_{\alpha}f(a)}{k!}\int_a^t \left(\frac{s^\alpha-a^\alpha}{\alpha}\right)^{k} d_{\alpha}s \\
  &=& \int_a^t \left[R_{k-1,f}(a,s) - \frac{D^{k}_{\alpha}f(a)}{k!}\left(\frac{s^\alpha-a^\alpha}{\alpha}\right)^{k} \right]d_{\alpha}s \\
  & & + \int_t^b \left[R_{k-1,f}(b,s) - \frac{D^{k}_{\alpha}f(b)}{k!}\left(\frac{s^\alpha-b^\alpha}{\alpha}\right)^{k} \right] d_{\alpha}s \\ 
  &=& \int_a^t R_{k,f}(a,s)d_{\alpha}s + \int_t^b R_{k,f}(b,s) d_{\alpha}s.
\end{eqnarray*}
This completes the proof.
\end{proof}

\begin{cor}\label{c7}
Let $\alpha\in(0,1]$. For $n\ge -1$,
\begin{eqnarray*}  
  \int_a^b \frac{D^{n+1}_{\alpha}f(s)}{(n+1)!}\left(\frac{a^\alpha-s^\alpha}{\alpha}\right)^{n+1} d_{\alpha}s = \int_a^b R_{n,f}(b,s)d_{\alpha}s, \\
  \int_a^b \frac{D^{n+1}_{\alpha}f(s)}{(n+1)!}\left(\frac{b^\alpha-s^\alpha}{\alpha}\right)^{n+1} d_{\alpha}s = \int_a^b R_{n,f}(a,s)d_{\alpha}s.
\end{eqnarray*}
\end{cor}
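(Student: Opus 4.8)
The plan is to obtain both identities as direct specializations of Lemma \ref{l6}, which I may assume. Recall that Lemma \ref{l6} asserts, for any $t\in[a,b]$,
$$ \int_a^b \frac{D^{n+1}_{\alpha}f(s)}{(n+1)!}\left(\frac{t^\alpha-s^\alpha}{\alpha}\right)^{n+1} d_{\alpha}s = \int_a^t R_{n,f}(a,s)\,d_{\alpha}s + \int_t^b R_{n,f}(b,s)\,d_{\alpha}s. $$
The essential observation is that the right-hand side splits the interval $[a,b]$ at the point $t$, so choosing $t$ to coincide with an endpoint forces one of the two integrals to degenerate to zero.

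First I would set $t=a$ in the lemma. The first integral on the right then runs from $a$ to $a$ and vanishes, while the second becomes $\int_a^b R_{n,f}(b,s)\,d_{\alpha}s$; the left-hand side simultaneously specializes to $\int_a^b \frac{D^{n+1}_{\alpha}f(s)}{(n+1)!}\bigl(\frac{a^\alpha-s^\alpha}{\alpha}\bigr)^{n+1} d_{\alpha}s$. This yields the first claimed identity at once. Symmetrically, setting $t=b$ kills the second integral (now running from $b$ to $b$) and leaves $\int_a^b R_{n,f}(a,s)\,d_{\alpha}s$, while the kernel on the left becomes $\bigl(\frac{b^\alpha-s^\alpha}{\alpha}\bigr)^{n+1}$; this gives the second identity.

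There is no genuine obstacle here: the entire argument is the remark that evaluating the endpoint-split identity of Lemma \ref{l6} at $t=a$ or $t=b$ collapses one summand. The only point requiring a moment's care is confirming that the degenerate integral is legitimately zero, which is immediate since its limits of integration coincide, so the corollary follows directly.
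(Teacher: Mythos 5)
Your proof is correct and matches the paper's intent: the paper states Corollary \ref{c7} with no written proof precisely because it is the immediate specialization of Lemma \ref{l6} at $t=a$ and $t=b$, which is exactly what you carry out.
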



\section{applications of the steffensen inequality}

Let $\alpha\in(0,1]$. In the following we adapt to the $\alpha$-fractional setting some results from \cite{gauchman} by applying the fractional Steffensen inequality, Theorem \ref{t4}.


\begin{theorem}\label{t9}
Let $\alpha\in(0,1]$ and $f:[a,b]\rightarrow\R$ be an $n+1$ times $\alpha$-fractional differentiable function such that $D^{n+1}_{\alpha}f$ is increasing and $D^{n}_{\alpha}f$ is decreasing on $[a,b]$. If
\[ \ell:= \frac{b-a}{n+2}, \]
then 
\begin{eqnarray*}
 D^{n}_{\alpha}f(a+\ell) - D^{n}_{\alpha}f(a) 
 &\le& (n+1)!\left(\frac{\alpha}{b^{\alpha}-a^{\alpha}}\right)^{n+1} \int_a^b R_{n,f}(a,s)d_{\alpha}s \\
 &\le& D^{n}_{\alpha}f(b) - D^{n}_{\alpha}f(b-\ell). 
\end{eqnarray*}
\end{theorem}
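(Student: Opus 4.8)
The plan is to recognize the central quantity as a Steffensen integral and apply Theorem \ref{t4}. First I would invoke Corollary \ref{c7}, which rewrites
$$ \int_a^b R_{n,f}(a,s)\, d_{\alpha}s = \int_a^b \frac{D^{n+1}_{\alpha}f(s)}{(n+1)!}\left(\frac{b^\alpha - s^\alpha}{\alpha}\right)^{n+1} d_{\alpha}s. $$
Multiplying by the prefactor $(n+1)!\,(\alpha/(b^\alpha - a^\alpha))^{n+1}$ cancels the constants and leaves the middle expression in the form $\int_a^b D^{n+1}_{\alpha}f(s)\, g(s)\, d_{\alpha}s$, where
$$ g(s) := \left(\frac{b^\alpha - s^\alpha}{b^\alpha - a^\alpha}\right)^{n+1}. $$
For $s\in[a,b]$ one has $b^\alpha - s^\alpha\in[0,b^\alpha-a^\alpha]$, so $g$ maps into $[0,1]$ and is the natural weight for a Steffensen argument.

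The key observation is that the two monotonicity hypotheses supply exactly the two conditions required by Theorem \ref{t4}. Since $D^n_{\alpha}f$ is decreasing and $D^{n+1}_{\alpha}f = D_\alpha(D^n_{\alpha}f) = s^{1-\alpha}(D^n_{\alpha}f)'$ by \eqref{fracshort}, the factor $s^{1-\alpha}\ge 0$ forces $D^{n+1}_{\alpha}f\le 0$; hence $\phi:=-D^{n+1}_{\alpha}f$ is nonnegative. Because $D^{n+1}_{\alpha}f$ is increasing, $\phi$ is decreasing. Thus $\phi$ and $g$ satisfy precisely the hypotheses of the fractional Steffensen inequality. I would then apply Theorem \ref{t4} to $\phi$ and $g$, after checking that the Steffensen length coincides with the prescribed $\ell$: substituting $u=s^\alpha$ and then $v=b^\alpha-u$ in $\int_a^b g(s)\, d_{\alpha}s$ reduces it to $\frac{1}{\alpha}\int_0^{b^\alpha-a^\alpha}(v/(b^\alpha-a^\alpha))^{n+1}\, dv = \frac{b^\alpha-a^\alpha}{\alpha(n+2)}$, so the defining formula \eqref{elldef} gives $\frac{\alpha(b-a)}{b^\alpha-a^\alpha}\cdot\frac{b^\alpha-a^\alpha}{\alpha(n+2)} = \frac{b-a}{n+2}$, matching the stated $\ell$.

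To interpret the endpoint terms produced by Steffensen, I would use the fundamental theorem of fractional calculus: since $D^{n+1}_{\alpha}f(s)\, s^{\alpha-1} = (D^n_{\alpha}f)'(s)$ by \eqref{fracshort}, the integrals $\int_a^{a+\ell}\phi\, d_{\alpha}s$ and $\int_{b-\ell}^b\phi\, d_{\alpha}s$ telescope to $D^n_{\alpha}f(a)-D^n_{\alpha}f(a+\ell)$ and $D^n_{\alpha}f(b-\ell)-D^n_{\alpha}f(b)$, respectively. Feeding these into the Steffensen chain $\int_{b-\ell}^b\phi\, d_{\alpha}s \le \int_a^b\phi g\, d_{\alpha}s \le \int_a^{a+\ell}\phi\, d_{\alpha}s$ and then multiplying through by $-1$ (which reverses the inequalities and turns $\int_a^b\phi g\, d_{\alpha}s$ back into the target central quantity) yields the claimed two-sided bound. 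I expect the main obstacle to be bookkeeping rather than depth: getting the signs right when passing from $D^{n+1}_{\alpha}f$ to $\phi$ and back, and verifying that the Steffensen length really equals $(b-a)/(n+2)$ so that the limits of integration are the advertised ones.
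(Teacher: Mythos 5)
Your proposal is correct and follows essentially the same route as the paper: apply the fractional Steffensen inequality (Theorem \ref{t4}) to $F:=-D^{n+1}_{\alpha}f$ and the weight $g(s)=\bigl((b^\alpha-s^\alpha)/(b^\alpha-a^\alpha)\bigr)^{n+1}$, identify the middle term via Corollary \ref{c7}, check $\ell=(b-a)/(n+2)$, and evaluate the endpoint integrals by the fundamental theorem before flipping signs. The only difference is that you spell out the computation of $\int_a^b g\,d_{\alpha}s$ and the sign bookkeeping more explicitly than the paper does.
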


\begin{proof}
Let $F:=-D^{n+1}_{\alpha}f$. Because $D^{n}_{\alpha}f$ is decreasing, $D^{n+1}_{\alpha}f\le 0$, so that $F\ge 0$ and decreasing on $[a,b]$. Define
\[ g(t):=\left(\frac{b^{\alpha}-t^{\alpha}}{b^{\alpha}-a^{\alpha}}\right)^{n+1} \in[0,1], \quad t\in[a,b], \quad n\ge -1. \]
Note that $F,g$ satisfy the assumptions of Steffensen's inequality, Theorem \ref{t4}; using \eqref{elldef},
\[ \ell = \frac{\alpha(b-a)}{b^{\alpha}-a^{\alpha}}\int_{a}^{b} g(t) d_{\alpha}t=\frac{b-a}{n+2}, \]
and 
$$ -\int_{b-\ell}^b D^{n+1}_{\alpha}f(t) d_{\alpha}t \le -\int_a^b D^{n+1}_{\alpha}f(t)\left(\frac{b^{\alpha}-t^{\alpha}}{b^{\alpha}-a^{\alpha}}\right)^{n+1} d_{\alpha}t \le -\int_a^{a+\ell} D^{n+1}_{\alpha}f(t)d_{\alpha}t. $$
By Corollary \ref{c7} this simplifies to
\[ D^{n}_{\alpha}f(t)|_{t=a}^{a+\ell} \le (n+1)!\left(\frac{\alpha}{b^{\alpha}-a^{\alpha}}\right)^{n+1} \int_a^b R_{n,f}(a,t)d_{\alpha}t
      \le D^{n}_{\alpha}f(t)|^b_{t=b-\ell}. \]
This completes the proof.
\end{proof}

The following corollary is the first Hermite-Hadamard inequality, derived from Theorem \ref{t9} with $n=0$.


\begin{cor}[Hermite-Hadamard Inequality I]\label{c10}
Let $\alpha\in(0,1]$ and $f:[a,b]\rightarrow\R$ be an $\alpha$-fractional differentiable function such that $D_{\alpha}f$ is increasing and $f$ is decreasing on $[a,b]$. Then
\begin{eqnarray*}
 f\left(\frac{a+b}{2}\right) \le \frac{\alpha}{b^{\alpha}-a^{\alpha}} \int_a^b f(s)d_{\alpha}s \le f(b) + f(a) - f\left(\frac{a+b}{2}\right). 
\end{eqnarray*}
\end{cor}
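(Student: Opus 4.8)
The plan is to obtain this corollary as the direct specialization of Theorem \ref{t9} to the case $n=0$, so the bulk of the work is verifying that the hypotheses match and then simplifying the integral term. First I would check the hypotheses: with $n=0$, Theorem \ref{t9} requires $D^1_{\alpha}f=D_{\alpha}f$ to be increasing and $D^0_{\alpha}f=f$ to be decreasing on $[a,b]$, which are exactly the assumptions imposed in the corollary. Moreover $\ell=\frac{b-a}{n+2}=\frac{b-a}{2}$, so that $a+\ell=b-\ell=\frac{a+b}{2}$, and the three endpoint quantities appearing in Theorem \ref{t9} collapse to a single midpoint value. Thus Theorem \ref{t9} immediately yields
\[
 f\!\left(\tfrac{a+b}{2}\right)-f(a)
 \ \le\ \frac{\alpha}{b^{\alpha}-a^{\alpha}}\int_a^b R_{0,f}(a,s)\,d_{\alpha}s
 \ \le\ f(b)-f\!\left(\tfrac{a+b}{2}\right),
\]
where I have used $(n+1)!\big(\frac{\alpha}{b^{\alpha}-a^{\alpha}}\big)^{n+1}=\frac{\alpha}{b^{\alpha}-a^{\alpha}}$ at $n=0$.

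The second step is to evaluate the remainder integral explicitly. From the definition \eqref{remain}, the $n=0$ remainder retains only the $k=0$ term of the Taylor sum, giving $R_{0,f}(a,s)=f(s)-f(a)$. Consequently
\[
 \int_a^b R_{0,f}(a,s)\,d_{\alpha}s
 = \int_a^b f(s)\,d_{\alpha}s - f(a)\int_a^b 1\,d_{\alpha}s
 = \int_a^b f(s)\,d_{\alpha}s - f(a)\,\frac{b^{\alpha}-a^{\alpha}}{\alpha},
\]
using $\int_a^b 1\,d_{\alpha}s=\int_a^b s^{\alpha-1}\,ds=\frac{b^{\alpha}-a^{\alpha}}{\alpha}$. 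Multiplying by the normalizing factor $\frac{\alpha}{b^{\alpha}-a^{\alpha}}$ makes the $f(a)$ contribution cancel against that constant, so the middle member of the chain becomes precisely $\frac{\alpha}{b^{\alpha}-a^{\alpha}}\int_a^b f(s)\,d_{\alpha}s - f(a)$.

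The final step is purely algebraic: I would add $f(a)$ to all three members of the displayed inequality. The left member becomes $f\!\left(\tfrac{a+b}{2}\right)$, the middle becomes $\frac{\alpha}{b^{\alpha}-a^{\alpha}}\int_a^b f(s)\,d_{\alpha}s$, and the right becomes $f(b)+f(a)-f\!\left(\tfrac{a+b}{2}\right)$, which is exactly the asserted Hermite--Hadamard inequality. There is no real analytical obstacle here, since all the inequality content is inherited from Theorem \ref{t9}; the only point demanding care is the bookkeeping in identifying $R_{0,f}(a,s)=f(s)-f(a)$ and confirming that the constant $\frac{\alpha}{b^{\alpha}-a^{\alpha}}$ cancels the $f(a)$ term cleanly, after which the rearrangement is immediate.
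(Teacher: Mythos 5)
Your proposal is correct and matches the paper's intended derivation exactly: the paper obtains this corollary as the $n=0$ specialization of Theorem \ref{t9}, and your verification that $\ell=\tfrac{b-a}{2}$, $R_{0,f}(a,s)=f(s)-f(a)$, and the subsequent cancellation of $f(a)$ supplies precisely the routine details the paper leaves implicit.
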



\begin{theorem}\label{t12}
Let $\alpha\in(0,1]$ and $f:[a,b]\rightarrow\R$ be an $n+1$ times $\alpha$-fractional differentiable function such that 
\[ m \le D^{n+1}_{\alpha}f \le M \] 
on $[a,b]$ for some real numbers $m<M$. Then 
\begin{eqnarray}
 \frac{m}{(n+2)!} \left(\frac{b^{\alpha}-a^{\alpha}}{\alpha}\right)^{n+2} + \frac{M-m}{(n+2)!}\left(\frac{b^{\alpha}-(b-\ell)^{\alpha}}{\alpha}\right)^{n+2} \le \int_a^b R_{n,f}(a,t)d_{\alpha}t \nonumber \\
   \le \frac{M}{(n+2)!} \left(\frac{b^{\alpha}-a^{\alpha}}{\alpha}\right)^{n+2} + \frac{m-M}{(n+2)!}\left(\frac{b^{\alpha}-(a+\ell)^{\alpha}}{\alpha}\right)^{n+2}, \label{t12result}
\end{eqnarray}
where $\ell$ is given by
\[ \ell = \frac{\alpha(b-a)}{\left(b^{\alpha}-a^{\alpha}\right)(M-m)} \left(D^{n}_{\alpha}f(b) - D^{n}_{\alpha}f(a) - m\left(\frac{b^{\alpha}-a^{\alpha}}{\alpha}\right)\right) \]
\end{theorem}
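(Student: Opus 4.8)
The plan is to follow the same Steffensen-based strategy used in the proof of Theorem \ref{t9}, but with the monotonicity normalization replaced by one built from the bounds $m \le D^{n+1}_{\alpha}f \le M$. First I would introduce the normalized function
$$ g(t) := \frac{D^{n+1}_{\alpha}f(t)-m}{M-m}, \qquad t\in[a,b], $$
which takes values in $[0,1]$ precisely because of the hypothesis $m \le D^{n+1}_{\alpha}f \le M$, together with the weight
$$ \phi(t) := \left(\frac{b^{\alpha}-t^{\alpha}}{\alpha}\right)^{n+1}, $$
which is nonnegative and decreasing on $[a,b]$ since $t\mapsto t^{\alpha}$ is increasing. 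These are exactly the ingredients that the fractional Steffensen inequality, Theorem \ref{t4}, requires, with $\phi$ playing the role of the decreasing nonnegative function and $g$ the role of the $[0,1]$-valued weight.

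Next I would use Corollary \ref{c7} to rewrite the quantity of interest as
$$ \int_a^b R_{n,f}(a,t)\,d_{\alpha}t = \frac{1}{(n+1)!}\int_a^b D^{n+1}_{\alpha}f(t)\,\phi(t)\,d_{\alpha}t, $$
and then substitute $D^{n+1}_{\alpha}f(t) = m + (M-m)g(t)$ to split this into an exactly computable ``$m$-term'' plus $\frac{M-m}{(n+1)!}\int_a^b g(t)\phi(t)\,d_{\alpha}t$. The $m$-term is evaluated directly by the change of variables $u=(b^{\alpha}-t^{\alpha})/\alpha$, $du=-t^{\alpha-1}\,dt$, giving $\frac{m}{(n+2)!}(\tfrac{b^{\alpha}-a^{\alpha}}{\alpha})^{n+2}$, which is precisely the leading term appearing in both bounds of \eqref{t12result}.

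It then remains to bound $\int_a^b g\phi\,d_{\alpha}t$ from above and below via Theorem \ref{t4}. The associated Steffensen length is $\ell = \frac{\alpha(b-a)}{b^{\alpha}-a^{\alpha}}\int_a^b g\,d_{\alpha}t$, and here I would invoke the fractional fundamental theorem of calculus: since $(D^{n}_{\alpha}f)'(t)=t^{\alpha-1}D^{n+1}_{\alpha}f(t)$ by \eqref{fracshort}, we have $\int_a^b D^{n+1}_{\alpha}f\,d_{\alpha}t = D^{n}_{\alpha}f(b)-D^{n}_{\alpha}f(a)$; substituting this into the formula for $\ell$ reproduces exactly the expression for $\ell$ stated in the theorem. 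The Steffensen bounds $\int_{b-\ell}^b\phi\,d_{\alpha}t$ and $\int_a^{a+\ell}\phi\,d_{\alpha}t$ are again evaluated with the same substitution, yielding $\frac{1}{n+2}(\frac{b^{\alpha}-(b-\ell)^{\alpha}}{\alpha})^{n+2}$ and $\frac{1}{n+2}[(\frac{b^{\alpha}-a^{\alpha}}{\alpha})^{n+2}-(\frac{b^{\alpha}-(a+\ell)^{\alpha}}{\alpha})^{n+2}]$ respectively. Assembling these with the $m$-term gives \eqref{t12result} after routine simplification.

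The main obstacle I anticipate is bookkeeping rather than any conceptual difficulty: one must verify that the Steffensen length produced by $\ell=\frac{\alpha(b-a)}{b^{\alpha}-a^{\alpha}}\int_a^b g\,d_{\alpha}t$ coincides with the prescribed $\ell$, which hinges on correctly applying the fractional antiderivative identity above, and one must keep the factorials and the $(M-m)$ and $m$ coefficients straight when combining the two boundary integrals so that the upper bound collapses to the stated $\frac{M}{(n+2)!}(\cdots)^{n+2}+\frac{m-M}{(n+2)!}(\cdots)^{n+2}$ form. Care is also needed at the degenerate point $a=0$, where $t^{\alpha-1}$ is only decreasing on $(a,b]$, exactly as handled in the lemma preceding Theorem \ref{t4}.
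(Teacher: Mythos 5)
Your proposal is correct and follows essentially the same route as the paper's proof: the paper applies Theorem \ref{t4} to $F(t)=\frac{1}{(n+1)!}\bigl(\frac{b^{\alpha}-t^{\alpha}}{\alpha}\bigr)^{n+1}$ and $G(t)=\frac{1}{M-m}\bigl(D^{n+1}_{\alpha}f(t)-m\bigr)$, which are exactly your $\phi/(n+1)!$ and $g$, and uses Corollary \ref{c7} and the same boundary-integral evaluations to rearrange into \eqref{t12result}. The only cosmetic difference is that you substitute $D^{n+1}_{\alpha}f=m+(M-m)g$ before applying Steffensen, whereas the paper computes $\int_a^b FG\,d_{\alpha}t$ and then shifts by the $m$-term afterward.
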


\begin{proof}
Let 
\begin{eqnarray*}
 F(t)&:=&\frac{1}{(n+1)!}\left(\frac{b^{\alpha}-t^{\alpha}}{\alpha}\right)^{n+1}, \\
 k(t)&:=&\frac 1{M-m}\left(f(t)-\frac{m}{(n+1)!}\left(\frac{t^{\alpha}-a^{\alpha}}{\alpha}\right)^{n+1}\right),  \\
 G(t)&:=&D^{n+1}_{\alpha}k(t)=\frac 1{M-m}\left(D^{n+1}_{\alpha}f(t)-m\right)\in[0,1].
\end{eqnarray*}
Observe that $F$ is nonnegative and decreasing, and
\[ \int_a^b G(t)d_{\alpha}t = \frac 1{M-m}\left(D^{n}_{\alpha}f(b) - D^{n}_{\alpha}f(a) - m\left(\frac{b^{\alpha}-a^{\alpha}}{\alpha}\right)\right). \]
Since $F,G$ satisfy the hypotheses of Theorem \ref{t4}, we compute the various integrals given in \eqref{steff}, after using \eqref{elldef} to set
\[ \ell=\frac{\alpha(b-a)}{b^{\alpha}-a^{\alpha}}\int_{a}^{b} G(t) d_{\alpha}t. \]
We have
\[ \int_{b-\ell}^b F(t)d_{\alpha}t = \int_{b-\ell}^b \frac{1}{(n+1)!}\left(\frac{b^{\alpha}-t^{\alpha}}{\alpha}\right)^{n+1} d_{\alpha}t 
                              = \frac{1}{(n+2)!}\left(\frac{b^{\alpha}-(b-\ell)^{\alpha}}{\alpha}\right)^{n+2}, \]
and
\[ \int_a^{a+\ell} F(t)d_{\alpha}t = \frac{1}{(n+2)!}\left[\left(\frac{b^{\alpha}-a^{\alpha}}{\alpha}\right)^{n+2}-\left(\frac{b^{\alpha}-(a+\ell)^{\alpha}}{\alpha}\right)^{n+2}\right]. \]
Moreover, using Corollary \ref{c7}, we have
\begin{eqnarray*}
  \int_a^b F(t)G(t) d_{\alpha}t 
    &=& \frac 1{(M-m)(n+1)!}\int_a^b \left(\frac{b^{\alpha}-t^{\alpha}}{\alpha}\right)^{n+1}\left(D^{n+1}_{\alpha}f(t)-m\right)d_{\alpha}t \\
    &=& \frac 1{M-m}\int_a^b R_{n,f}(a,t)d_{\alpha}t - \frac m{(M-m)(n+2)!}\left(\frac{b^{\alpha}-a^{\alpha}}{\alpha}\right)^{n+2}.
\end{eqnarray*}
Using Steffensen's inequality \eqref{steff} and some rearranging, we obtain \eqref{t12result}.
\end{proof}


\begin{cor}\label{t13}
Let $\alpha\in(0,1]$ and $f:[a,b]\rightarrow\R$ be an $\alpha$-fractional differentiable function such that 
\[ m \le D_{\alpha}f \le M \] 
on $[a,b]$ for some real numbers $m<M$. Then
\begin{eqnarray}
 \frac{m}{2} \left(\frac{b^{\alpha}-a^{\alpha}}{\alpha}\right)^{2} + \frac{M-m}{2}\left(\frac{b^{\alpha}-(b-\ell)^{\alpha}}{\alpha}\right)^{2} 
 \le \int_a^b f(t)d_{\alpha}t - f(a)\left(\frac{b^{\alpha}-a^{\alpha}}{\alpha}\right) \nonumber \\
   \le \frac{M}{2} \left(\frac{b^{\alpha}-a^{\alpha}}{\alpha}\right)^{2} + \frac{m-M}{2}\left(\frac{b^{\alpha}-(a+\ell)^{\alpha}}{\alpha}\right)^{2}, \label{t13result}
\end{eqnarray}
where $\ell$ is given by
\[ \ell = \frac{\alpha(b-a)}{\left(b^{\alpha}-a^{\alpha}\right)(M-m)} \left(f(b) - f(a) - m\left(\frac{b^{\alpha}-a^{\alpha}}{\alpha}\right)\right) \]
\end{cor}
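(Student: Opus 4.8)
The plan is to obtain this corollary as the special case $n=0$ of Theorem \ref{t12}, so the main task is simply to verify that each ingredient of that theorem reduces to the corresponding expression stated here. First I would check the hypotheses: setting $n=0$ makes $f$ precisely an $\alpha$-fractional differentiable function, and the condition $m \le D^{n+1}_{\alpha}f \le M$ becomes $m \le D_{\alpha}f \le M$, which is exactly the assumption of the corollary.

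Next, the key step is to evaluate the central quantity appearing in \eqref{t12result}. With $n=0$, the definition of the Taylor remainder in \eqref{remain} gives
$$ R_{0,f}(a,t) = f(t) - \frac{D^{0}_{\alpha}f(a)}{0!}\left(\frac{t^\alpha - a^\alpha}{\alpha}\right)^0 = f(t) - f(a), $$
since $D^{0}_{\alpha}f = f$. Integrating against $d_{\alpha}t$ and using the elementary identity $\int_a^b 1\, d_{\alpha}t = (b^\alpha - a^\alpha)/\alpha$ then yields
$$ \int_a^b R_{0,f}(a,t)\, d_{\alpha}t = \int_a^b f(t)\, d_{\alpha}t - f(a)\left(\frac{b^\alpha - a^\alpha}{\alpha}\right), $$
which is precisely the middle expression in \eqref{t13result}.

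Finally, I would confirm that the remaining constants specialize correctly: with $n=0$ one has $(n+2)! = 2$ and the exponent $n+2 = 2$, so the two outer bounds of \eqref{t12result} collapse to those in \eqref{t13result}; and since $D^{0}_{\alpha}f = f$, the formula for $\ell$ in Theorem \ref{t12} reduces verbatim to the one stated in the corollary. Substituting all of these identifications into the conclusion of Theorem \ref{t12} completes the argument. I do not expect any genuine obstacle here, as the corollary is a direct specialization; the only points requiring care are the bookkeeping identification $R_{0,f}(a,t) = f(t) - f(a)$ and the basic integral $\int_a^b 1\, d_{\alpha}t = (b^\alpha - a^\alpha)/\alpha$.
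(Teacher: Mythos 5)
Your proposal is correct and matches the paper's proof, which is exactly the specialization of Theorem \ref{t12} to $n=0$ together with the identification $R_{0,f}(a,t)=f(t)-f(a)$ (via Corollary \ref{c7} / the remainder definition \eqref{remain}). The bookkeeping you carry out — $(n+2)!=2$, exponent $n+2=2$, $D^0_\alpha f=f$ in the formula for $\ell$, and $\int_a^b 1\,d_\alpha t=(b^\alpha-a^\alpha)/\alpha$ — is precisely what the paper leaves implicit.
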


\begin{proof}
Use the previous theorem with $n=0$, and Corollary \ref{c7}.
\end{proof}


\section{applications of the $\breve{\text{C}}$eby$\breve{\text{s}}$ev inequality}

Let $\alpha\in(0,1]$. We begin with $\breve{\text{C}}$eby$\breve{\text{s}}$ev's inequality for $\alpha$-fractional integrals, then apply it to obtain a Hermite-Hadamard-type inequality.


\begin{theorem}[$\breve{\text{C}}$eby$\breve{\text{s}}$ev Inequality]\label{t16}
Let $f$ and $g$ be both increasing or both decreasing in $[a,b]$, and let $\alpha\in(0,1]$.  Then
\[ \int_a^b f(t)g(t)d_{\alpha}t \ge \frac {\alpha}{b^\alpha-a^\alpha}\int_a^bf(t)d_{\alpha}t\int_a^b g(t)d_{\alpha}t. \]
If one of the functions is increasing and the other is decreasing, then the above inequality is reversed.
\end{theorem}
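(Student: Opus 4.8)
The plan is to use the classical symmetrization trick adapted to the $\alpha$-fractional weight $d_{\alpha}t = t^{\alpha-1}dt$, which is nonnegative on $[a,b]$ because $\alpha\in(0,1]$ forces $t^{\alpha-1}\ge 0$ for every $t\ge 0$. Writing $I:=\int_a^b 1\,d_{\alpha}t = \frac{b^\alpha-a^\alpha}{\alpha}>0$, the asserted inequality is equivalent to
\[ I\int_a^b f(t)g(t)\,d_{\alpha}t \ge \left(\int_a^b f\,d_{\alpha}t\right)\left(\int_a^b g\,d_{\alpha}t\right), \]
so it suffices to control the sign of the difference of these two quantities.

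First I would introduce the double integral
\[ J := \int_a^b\int_a^b \bigl(f(t)-f(s)\bigr)\bigl(g(t)-g(s)\bigr)\,d_{\alpha}s\,d_{\alpha}t \]
over the square $[a,b]\times[a,b]$ with respect to the product weight $s^{\alpha-1}t^{\alpha-1}\,ds\,dt$. Expanding the integrand into its four terms and integrating each by linearity and Fubini's theorem, the two mixed terms both reduce to $\left(\int_a^b f\,d_{\alpha}t\right)\left(\int_a^b g\,d_{\alpha}t\right)$, while the two diagonal terms each produce $I\int_a^b fg\,d_{\alpha}t$; collecting these yields $J = 2\left[I\int_a^b fg\,d_{\alpha}t - \left(\int_a^b f\,d_{\alpha}t\right)\left(\int_a^b g\,d_{\alpha}t\right)\right]$.

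The heart of the argument is then the elementary sign observation. When $f$ and $g$ are both increasing or both decreasing, the factors $f(t)-f(s)$ and $g(t)-g(s)$ always share the same sign, so their product is nonnegative for every $(s,t)\in[a,b]^2$; since the weight is nonnegative this forces $J\ge 0$, and dividing by $I>0$ gives the stated inequality. If instead one function increases while the other decreases, the product of differences is everywhere nonpositive, so $J\le 0$ and the inequality reverses, establishing the second claim.

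The step needing the most care is the justification for invoking Fubini: the monotone functions $f,g$ are bounded and measurable on the compact interval $[a,b]$, and the weight $t^{\alpha-1}$ remains integrable on $[a,b]$ even when $a=0$ because $\alpha>0$, so each of the four product terms is absolutely integrable over the square and the interchange of order of integration is legitimate. Apart from this integrability check and the nonnegativity of the weight, the only remaining work is the bookkeeping in the expansion of $J$, which is routine.
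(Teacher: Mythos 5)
Your proof is correct and is precisely the classical symmetrization argument that the paper invokes when it says ``the proof is very similar to the classical case with $\alpha=1$'': the double integral $J$, its expansion into $2\bigl[I\int_a^b fg\,d_{\alpha}t-\int_a^b f\,d_{\alpha}t\int_a^b g\,d_{\alpha}t\bigr]$, and the sign analysis of $(f(t)-f(s))(g(t)-g(s))$ against the nonnegative weight $s^{\alpha-1}t^{\alpha-1}$ are exactly what that reference to the classical case entails. Your attention to the integrability of $t^{\alpha-1}$ near $0$ when $a=0$ is a worthwhile detail the paper leaves implicit.
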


\begin{proof}
The proof is very similar to the classical case with $\alpha=1$.
\end{proof}

The following is an application of $\breve{\text{C}}$eby$\breve{\text{s}}$ev's inequality, which extends a similar result in \cite{gauchman} for $q$-calculus to this $\alpha$-fractional case.


\begin{theorem}\label{t17}
Let $\alpha\in(0,1]$. Assume that $D^{n+1}_{\alpha}f$ is monotonic on $[a,b]$. If $D^{n+1}_{\alpha}f$ is increasing, then
\begin{eqnarray*}
 0 &\ge& \int_a^b R_{n,f}(a,t) d_{\alpha}t 
    - \left(\frac{D^{n}_{\alpha}f(b)-D^{n}_{\alpha}f(a)}{(n+2)!}\right) \left(\frac{b^\alpha-a^\alpha}{\alpha}\right)^{n+1} \\
   &\ge& \left(\frac{D^{n+1}_{\alpha}f(a)-D^{n+1}_{\alpha}f(b)}{(n+2)!}\right) \left(\frac{b^\alpha-a^\alpha}{\alpha}\right)^{n+2}.
\end{eqnarray*}
If $D^{n+1}_{\alpha}f$ is decreasing, then the inequalities are reversed.
\end{theorem}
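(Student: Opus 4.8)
The plan is to recognize the middle quantity in the theorem as a constant multiple of the $\breve{\text{C}}$eby$\breve{\text{s}}$ev functional of $D^{n+1}_{\alpha}f$ against a fixed power kernel, and then to squeeze it from above by $\breve{\text{C}}$eby$\breve{\text{s}}$ev's inequality and from below by elementary monotonicity bounds. First I would set $P:=\frac{b^{\alpha}-a^{\alpha}}{\alpha}$ and $h(t):=\left(\frac{b^{\alpha}-t^{\alpha}}{\alpha}\right)^{n+1}$, noting that $h$ is nonnegative and decreasing on $[a,b]$, and record two facts: the antiderivative computation $\int_a^b h(t)\,d_{\alpha}t=\frac{1}{n+2}P^{n+2}$, and the fundamental theorem for $d_\alpha$-integration $\int_a^b D^{n+1}_{\alpha}f(t)\,d_{\alpha}t=D^{n}_{\alpha}f(b)-D^{n}_{\alpha}f(a)$. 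By the second identity of Corollary \ref{c7}, $\int_a^b R_{n,f}(a,t)\,d_{\alpha}t=\frac{1}{(n+1)!}\int_a^b D^{n+1}_{\alpha}f(t)\,h(t)\,d_{\alpha}t$.

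The crucial observation is the normalization identity
$$ \frac{D^{n}_{\alpha}f(b)-D^{n}_{\alpha}f(a)}{(n+2)!}\,P^{n+1}=\frac{1}{(n+1)!}\cdot\frac{1}{P}\int_a^b D^{n+1}_{\alpha}f(t)\,d_{\alpha}t\int_a^b h(t)\,d_{\alpha}t, $$
which follows at once from the two recorded facts. It shows that the middle expression $A$ of the theorem equals $\frac{1}{(n+1)!}$ times the $\breve{\text{C}}$eby$\breve{\text{s}}$ev functional $T:=\int_a^b D^{n+1}_{\alpha}f\,h\,d_{\alpha}t-\frac1P\int_a^b D^{n+1}_{\alpha}f\,d_{\alpha}t\int_a^b h\,d_{\alpha}t$. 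Since $h$ is decreasing while $D^{n+1}_{\alpha}f$ is (in the first case) increasing, the two factors are oppositely monotone, so Theorem \ref{t16} in its reversed form gives $T\le 0$; this is exactly the first, upper inequality $0\ge A$.

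For the lower inequality I would dispense with $\breve{\text{C}}$eby$\breve{\text{s}}$ev and argue directly from the bounds $D^{n+1}_{\alpha}f(a)\le D^{n+1}_{\alpha}f(t)\le D^{n+1}_{\alpha}f(b)$ (valid because $D^{n+1}_{\alpha}f$ is increasing) together with $h\ge 0$. These give $\int_a^b D^{n+1}_{\alpha}f\,h\,d_{\alpha}t\ge D^{n+1}_{\alpha}f(a)\int_a^b h\,d_{\alpha}t$ and $\frac1P\int_a^b D^{n+1}_{\alpha}f\,d_{\alpha}t\le D^{n+1}_{\alpha}f(b)$, whence $T\ge\left(D^{n+1}_{\alpha}f(a)-D^{n+1}_{\alpha}f(b)\right)\int_a^b h\,d_{\alpha}t$; multiplying by $\frac{1}{(n+1)!}$ and inserting $\int_a^b h\,d_{\alpha}t=\frac{1}{n+2}P^{n+2}$ yields precisely the stated lower bound $A\ge B$. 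The decreasing case is handled by reversing every monotonicity statement: now $D^{n+1}_{\alpha}f$ and $h$ are synchronous so Theorem \ref{t16} gives $T\ge 0$, and the two elementary estimates flip, producing $0\le A\le B$.

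The only nonroutine point — and hence the main obstacle — is spotting the normalization identity that recasts the seemingly ad hoc combination $\frac{D^{n}_{\alpha}f(b)-D^{n}_{\alpha}f(a)}{(n+2)!}P^{n+1}$ as the mean-value term $\frac1P\int D^{n+1}_{\alpha}f\int h$ of the $\breve{\text{C}}$eby$\breve{\text{s}}$ev functional; once that is in hand, the upper bound is a one-line appeal to Theorem \ref{t16} and the lower bound is a one-line monotonicity estimate. Everything else (the antiderivative of $h$, the fundamental theorem, and the sign bookkeeping in the reversed case) is routine.
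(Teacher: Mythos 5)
Your proposal is correct and follows essentially the same route as the paper: the upper bound comes from the reversed \v{C}eby\v{s}ev inequality (Theorem \ref{t16}) applied to $D^{n+1}_{\alpha}f$ and the decreasing kernel $\left(\frac{b^\alpha-t^\alpha}{\alpha}\right)^{n+1}$ via Corollary \ref{c7}, and the lower bound comes from the same two pointwise monotonicity estimates the paper uses (bounding $\int_a^b D^{n+1}_{\alpha}f\,h\,d_{\alpha}t$ below by $D^{n+1}_{\alpha}f(a)\int_a^b h\,d_{\alpha}t$ and the mean of $D^{n+1}_{\alpha}f$ above by $D^{n+1}_{\alpha}f(b)$). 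Your repackaging of the middle quantity as $\frac{1}{(n+1)!}$ times a \v{C}eby\v{s}ev functional is a clean way to organize the identical computation.
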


\begin{proof}
The situation where $D^{n+1}_{\alpha}f$ is decreasing is analogous to that of $D^{n+1}_{\alpha}f$ increasing. Thus, assume $D^{n+1}_{\alpha}f$ is increasing and set 
\[ F(t):=D^{n+1}_{\alpha}f(t), \quad G(t):=\frac{1}{(n+1)!} \left(\frac{b^\alpha-t^\alpha}{\alpha}\right)^{n+1}. \] 
Then $F$ is increasing by assumption, and $G$ is decreasing, so that by $\breve{\text{C}}$eby$\breve{\text{s}}$ev's inequality,
$$ \int_a^b F(t)G(t)d_{\alpha}t \le \frac {\alpha}{b^\alpha-a^\alpha}\int_a^b F(t)d_{\alpha}t \int_a^b G(t)d_{\alpha}t. $$
By Corollary \ref{c7}, 
\[  \int_a^bF(t)G(t)d_{\alpha}t = \int_a^b \frac{D^{n+1}_{\alpha}f(t)}{(n+1)!}\left(\frac{b^\alpha-t^\alpha}{\alpha}\right)^{n+1} d_{\alpha}t = \int_a^b R_{n,f}(a,t)d_{\alpha}t. \]
We also have
\[ \int_a^b F(t)d_{\alpha}t = D^{n}_{\alpha}f(b)-D^{n}_{\alpha}f(a), \quad
 \int_a^b G(t)d_{\alpha}t = \frac{1}{(n+2)!} \left(\frac{b^\alpha-a^\alpha}{\alpha}\right)^{n+2}. \]
Thus $\breve{\text{C}}$eby$\breve{\text{s}}$ev's inequality implies
\[ \int_a^b R_{n,f}(a,t) d_{\alpha}t 
     \le \frac {\alpha}{b^\alpha-a^\alpha}\left(D^{n}_{\alpha}f(b)-D^{n}_{\alpha}f(a)\right)\frac{1}{(n+2)!} \left(\frac{b^\alpha-a^\alpha}{\alpha}\right)^{n+2}, \]
which subtracts to the left side of the inequality. Since $D^{n+1}_{\alpha}f$ is increasing on $[a,b]$,
\begin{eqnarray*}
 \frac{D^{n+1}_{\alpha}f(a)}{(n+2)!} \left(\frac{b^\alpha-a^\alpha}{\alpha}\right)^{n+2} 
 &\le& \left(\frac{D^{n}_{\alpha}f(b)-D^{n}_{\alpha}f(a)}{(n+2)!}\right) \left(\frac{b^\alpha-a^\alpha}{\alpha}\right)^{n+1} \\
 &\le& \frac{D^{n+1}_{\alpha}f(b)}{(n+2)!} \left(\frac{b^\alpha-a^\alpha}{\alpha}\right)^{n+2}, 
\end{eqnarray*}
and we have
\begin{eqnarray*} 
 \int_a^b R_{n,f}(a,t)d_{\alpha}t - \left(\frac{D^{n}_{\alpha}f(b)-D^{n}_{\alpha}f(a)}{(n+2)!}\right) \left(\frac{b^\alpha-a^\alpha}{\alpha}\right)^{n+1} \\
   \ge \int_a^b R_{n,f}(a,t) d_{\alpha}t - \frac{D^{n+1}_{\alpha}f(b)}{(n+2)!} \left(\frac{b^\alpha-a^\alpha}{\alpha}\right)^{n+2}. 
\end{eqnarray*}
Now Corollary \ref{c7} and $D^{n+1}_{\alpha}f$ is increasing imply that
\begin{eqnarray*}
 \int_a^b \frac{D^{n+1}_{\alpha}f(b)}{(n+1)!} \left(\frac{b^\alpha-t^\alpha}{\alpha}\right)^{n+1}d_{\alpha}t 
 &\ge& \int_a^b R_{n,f}(a,t)d_{\alpha}t \\
 &\ge& \int_a^b \frac{D^{n+1}_{\alpha}f(a)}{(n+1)!} \left(\frac{b^\alpha-t^\alpha}{\alpha}\right)^{n+1}d_{\alpha}t, 
\end{eqnarray*}
which simplifies to
$$ \frac{D^{n+1}_{\alpha}f(b)}{(n+2)!} \left(\frac{b^\alpha-a^\alpha}{\alpha}\right)^{n+2} \ge \int_a^b R_{n,f}(a,t)d_{\alpha}t
    \ge \frac{D^{n+1}_{\alpha}f(a)}{(n+2)!} \left(\frac{b^\alpha-a^\alpha}{\alpha}\right)^{n+2}. $$
This, together with the earlier lines give the right side of the inequality.
\end{proof}

Compare the following corollary with Corollary \ref{c10}.


\begin{cor}[Hermite-Hadamard Inequality II]\label{hhII}
Let $\alpha\in(0,1]$. If $D_{\alpha}f$ is increasing on $[a,b]$, then
\begin{equation}\label{hermite3}
 \frac{\alpha}{b^\alpha-a^\alpha} \int_a^b f(t) d_{\alpha}t \le \frac{f(b)+f(a)}{2}. 
\end{equation}
If $D_{\alpha}f$ is decreasing on $[a,b]$, then the inequalities are reversed.
\end{cor}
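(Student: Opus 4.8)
The plan is to specialize Theorem~\ref{t17} to the case $n=0$ and then unwind the resulting expression using the explicit form of the lowest-order Taylor remainder. Since $D^0_\alpha f = f$, the quantity $D^n_\alpha f(b)-D^n_\alpha f(a)$ appearing in Theorem~\ref{t17} becomes simply $f(b)-f(a)$, and the exponents $n+1$ and $n+2$ collapse to $1$ and $2$. The only genuine computation is to identify the integral $\int_a^b R_{0,f}(a,t)\,d_\alpha t$ in closed form.

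First I would read off the remainder from the definition~\eqref{remain}: with base point $a$ and variable $t$, the sum contains only the $k=0$ term, so $R_{0,f}(a,t)=f(t)-f(a)$. Next I would evaluate $\int_a^b 1\,d_\alpha t = \int_a^b t^{\alpha-1}\,dt = \tfrac{b^\alpha-a^\alpha}{\alpha}$, which gives
\[
 \int_a^b R_{0,f}(a,t)\,d_\alpha t = \int_a^b f(t)\,d_\alpha t - f(a)\left(\frac{b^\alpha-a^\alpha}{\alpha}\right).
\]
Assuming $D_\alpha f$ is increasing, the first inequality of Theorem~\ref{t17} with $n=0$ reads
\[
 0 \ge \int_a^b R_{0,f}(a,t)\,d_\alpha t - \frac{f(b)-f(a)}{2}\left(\frac{b^\alpha-a^\alpha}{\alpha}\right).
\]

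Then I would substitute the boxed integral identity into this inequality and solve for $\int_a^b f(t)\,d_\alpha t$. Collecting the two terms proportional to $\tfrac{b^\alpha-a^\alpha}{\alpha}$ gives $f(a)+\tfrac{f(b)-f(a)}{2} = \tfrac{f(a)+f(b)}{2}$, so that $\int_a^b f(t)\,d_\alpha t \le \tfrac{b^\alpha-a^\alpha}{\alpha}\cdot\tfrac{f(a)+f(b)}{2}$; multiplying through by the positive constant $\tfrac{\alpha}{b^\alpha-a^\alpha}$ yields \eqref{hermite3}. The decreasing case follows verbatim from the reversal clause of Theorem~\ref{t17}. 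There is no real obstacle here: the argument is a direct substitution, and the only point requiring care is the correct bookkeeping of the remainder $R_{0,f}(a,t)=f(t)-f(a)$ and the elementary antiderivative $\int_a^b 1\,d_\alpha t$.
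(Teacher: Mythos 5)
Your proposal is correct and is exactly the route the paper intends: the corollary is stated as an immediate consequence of Theorem~\ref{t17} (the paper prints no proof), and taking $n=0$ there, identifying $R_{0,f}(a,t)=f(t)-f(a)$, and dividing by the positive quantity $\tfrac{b^\alpha-a^\alpha}{\alpha}$ gives \eqref{hermite3} precisely as you describe. The bookkeeping of the remainder and the reversal clause for the decreasing case are both handled correctly.
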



\section{Ostrowski Inequality}

In this section we prove Ostrowski's $\alpha$-fractional inequality using a Montgomery identity. For more on Ostrowski's inequalities, see \cite{bm1} and the references therein.


\begin{lemma}[Montgomery Identity]\label{monty}
Let $a,b,s,t\in\R$ with $0\le a<b$, and let $f:[a,b]\rightarrow\R$ be $\alpha$-fractional differentiable for $\alpha\in(0,1]$. Then
\begin{equation}\label{61}
 f(t) = \frac{\alpha}{b^\alpha-a^\alpha} \int_a^b f(s)d_{\alpha}s + \frac{\alpha}{b^\alpha-a^\alpha} \int_a^b p(t,s)D_{\alpha}f(s)d_{\alpha}s 
\end{equation}
where
\begin{equation}\label{62}
 p(t,s):=\begin{cases} \frac{s^\alpha-a^\alpha}{\alpha} &:a\le s < t, \\ \frac{s^\alpha-b^\alpha}{\alpha} &: t\le s\le b. \end{cases}
\end{equation}
\end{lemma}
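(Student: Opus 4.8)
The plan is to reduce the kernel integral to ordinary calculus via the reduction $D_{\alpha}f(s)\,d_{\alpha}s = f'(s)\,ds$, which follows at once from \eqref{fracshort} together with $d_{\alpha}s = s^{\alpha-1}ds$, and then to integrate by parts. First I would abbreviate the kernel integral as $J:=\int_a^b p(t,s)D_{\alpha}f(s)d_{\alpha}s$ and split it at $s=t$ according to the two branches of \eqref{62}, writing
$$ J = \int_a^t \frac{s^\alpha-a^\alpha}{\alpha}D_{\alpha}f(s)d_{\alpha}s + \int_t^b \frac{s^\alpha-b^\alpha}{\alpha}D_{\alpha}f(s)d_{\alpha}s. $$

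Next I would record the two facts that drive the integration by parts. The product rule $D_{\alpha}(uv)=uD_{\alpha}v + vD_{\alpha}u$ holds because $D_{\alpha}=t^{1-\alpha}\frac{d}{dt}$ on differentiable functions by \eqref{fracshort}, and hence $\int_a^b u\,D_{\alpha}v\,d_{\alpha}s = [uv]_a^b - \int_a^b v\,D_{\alpha}u\,d_{\alpha}s$. I would also note, again from \eqref{fracshort}, that $D_{\alpha}\!\left(\frac{s^\alpha-c^\alpha}{\alpha}\right)=1$ for any constant $c$. These observations handle each branch: on $[a,t]$ the factor $\frac{s^\alpha-a^\alpha}{\alpha}$ has $D_{\alpha}$-derivative $1$ and vanishes at $s=a$, while on $[t,b]$ the factor $\frac{s^\alpha-b^\alpha}{\alpha}$ has $D_{\alpha}$-derivative $1$ and vanishes at $s=b$.

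Applying integration by parts to each piece then leaves only the boundary contribution at $s=t$ (the contributions at $s=a$ and $s=b$ vanish precisely because of the choice of the two branches of $p$), together with the integrals $-\int_a^t f\,d_{\alpha}s$ and $-\int_t^b f\,d_{\alpha}s$. The coefficients of $f(t)$ from the two branches combine as $\frac{t^\alpha-a^\alpha}{\alpha}-\frac{t^\alpha-b^\alpha}{\alpha}=\frac{b^\alpha-a^\alpha}{\alpha}$, and the two leftover integrals reassemble into $-\int_a^b f(s)\,d_{\alpha}s$. Thus $J=\frac{b^\alpha-a^\alpha}{\alpha}f(t)-\int_a^b f(s)\,d_{\alpha}s$; multiplying by $\frac{\alpha}{b^\alpha-a^\alpha}$ and rearranging gives exactly \eqref{61}.

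I expect the only genuine obstacle to be justifying the integration by parts when $a=0$, where the weight $s^{\alpha-1}$ is singular at the lower endpoint. Here one should either work on $[\veps,b]$ and pass to the limit $\veps\to 0^+$, using continuity of $f$ and the convention $D_{*}f(0)=\lim_{t\to 0^+}D_{*}f(t)$ from \eqref{derivdef}, or observe directly that the reduction $D_{\alpha}f\,d_{\alpha}s=f'\,ds$ cancels the singular weight entirely, so that the boundary factor $\frac{s^\alpha-a^\alpha}{\alpha}f(s)$ stays continuous up to $s=a=0$. Everything beyond this is routine bookkeeping of the two boundary evaluations.
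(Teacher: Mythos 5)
Your proposal is correct and follows essentially the same route as the paper: split the kernel integral at $s=t$, integrate each branch by parts using $D_{\alpha}\bigl(\frac{s^\alpha-c^\alpha}{\alpha}\bigr)=1$ so that the boundary terms at $a$ and $b$ vanish, then add and solve for $f(t)$. Your extra remark on justifying the endpoint $a=0$ is a sensible refinement that the paper's proof leaves implicit.
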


\begin{proof}
Integrating by parts, we have
\[ \int_a^t \left(\frac{s^\alpha-a^\alpha}{\alpha}\right)D_{\alpha} f(s)d_{\alpha}s = \frac{t^\alpha-a^\alpha}{\alpha}f(t) - \int_a^t f(s)d_{\alpha}s \]
and
\[ \int_t^b \left(\frac{s^\alpha-b^\alpha}{\alpha}\right)D_{\alpha} f(s)d_{\alpha}s = \frac{b^\alpha-t^\alpha}{\alpha}f(t) - \int_t^b f(s)d_{\alpha}s. \]
Adding and solving for $f$ yields the result.
\end{proof}


\begin{theorem}[Ostrowski Inequality]
Let $a,b,s,t\in\R$ with $0\le a<b$, and let $f:[a,b]\rightarrow\R$ be $\alpha$-fractional differentiable for $\alpha\in(0,1]$. Then
\begin{equation}\label{63}
 \left|f(t) - \frac{\alpha}{b^\alpha-a^\alpha} \int_a^b f(t)d_{\alpha}t \right| \le \frac{M}{2\alpha\left(b^\alpha-a^\alpha\right)} \left[\left(t^\alpha-a^\alpha\right)^2 + \left(b^\alpha-t^\alpha\right)^2 \right],
\end{equation}
where
\[ M:=\sup_{t\in(a,b)} \left|D_{\alpha}f(t)\right|. \]
This inequality is sharp in the sense that the right-hand side of \eqref{63} cannot be replaced by a smaller one.
\end{theorem}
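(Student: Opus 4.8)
The plan is to apply the Montgomery identity of Lemma \ref{monty} and estimate the resulting integral. First I would rearrange \eqref{61} so that the left-hand side of \eqref{63} stands alone:
\[ f(t) - \frac{\alpha}{b^\alpha-a^\alpha}\int_a^b f(s)\,d_{\alpha}s = \frac{\alpha}{b^\alpha-a^\alpha}\int_a^b p(t,s)\,D_{\alpha}f(s)\,d_{\alpha}s. \]
Taking absolute values, applying the triangle inequality for integrals, and using $|D_{\alpha}f(s)|\le M$ yields
\[ \left| f(t) - \frac{\alpha}{b^\alpha-a^\alpha}\int_a^b f(s)\,d_{\alpha}s \right| \le \frac{\alpha M}{b^\alpha-a^\alpha}\int_a^b |p(t,s)|\,d_{\alpha}s, \]
so the whole estimate reduces to evaluating the kernel integral $\int_a^b|p(t,s)|\,d_{\alpha}s$.

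Next I would compute that integral from the piecewise definition \eqref{62}. Since $p(t,s)\ge 0$ for $a\le s<t$ and $p(t,s)\le 0$ for $t\le s\le b$, I split at $t$ and substitute $u=(s^\alpha-a^\alpha)/\alpha$ on the first piece and $v=(b^\alpha-s^\alpha)/\alpha$ on the second; in each case $d_{\alpha}s=s^{\alpha-1}ds$ turns the integrand into $u\,du$ (resp.\ $v\,dv$), so each piece integrates to one half of a square. This gives
\[ \int_a^b |p(t,s)|\,d_{\alpha}s = \frac{1}{2\alpha^2}\left[(t^\alpha-a^\alpha)^2+(b^\alpha-t^\alpha)^2\right], \]
and substituting back with $\frac{\alpha M}{b^\alpha-a^\alpha}\cdot\frac{1}{2\alpha^2}=\frac{M}{2\alpha(b^\alpha-a^\alpha)}$ reproduces exactly \eqref{63}.

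For the sharpness assertion the idea is to exhibit a function turning both estimates above into equalities. Equality in the triangle inequality requires $p(t,s)D_{\alpha}f(s)$ to keep a constant sign, while equality in $|D_{\alpha}f|\le M$ forces $|D_{\alpha}f|\equiv M$; together these dictate $D_{\alpha}f(s)=M\,\mathrm{sgn}\,p(t,s)$, i.e.\ $D_{\alpha}f=M$ on $[a,t)$ and $D_{\alpha}f=-M$ on $(t,b]$. Integrating via \eqref{fracshort}, this is the tent-shaped function
\[ f(s)=\begin{cases} \dfrac{M}{\alpha}\,s^\alpha, & a\le s\le t,\\[2mm] \dfrac{M}{\alpha}\left(2t^\alpha-s^\alpha\right), & t\le s\le b, \end{cases} \]
which is continuous with $|D_{\alpha}f|=M$ throughout $(a,b)$ except at the peak $s=t$; for this $f$ every inequality in the chain becomes an equality, so the right-hand side of \eqref{63} is attained and cannot be lowered.

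The kernel computation is entirely routine. The main obstacle is the sharpness step, where the extremal $f$ fails to be $\alpha$-fractional differentiable at the single point $s=t$. I expect to handle this exactly as in the classical Ostrowski case: either observe that both the Montgomery identity (whose proof rests only on integrability in the integration by parts) and the supremum $M=\sup_{s\in(a,b)}|D_{\alpha}f(s)|$ are insensitive to one point, so equality genuinely holds; or, if strict differentiability at $t$ is insisted upon, smooth the tent near its peak to obtain a sequence of admissible functions whose left-hand sides converge to the right-hand side of \eqref{63}, thereby showing that no smaller bound can hold uniformly.
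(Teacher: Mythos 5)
Your derivation of the main inequality is exactly the paper's: rearrange the Montgomery identity, apply the triangle inequality with $|D_{\alpha}f|\le M$, and integrate the kernel piecewise to get $\frac{1}{2\alpha^{2}}\left[(t^{\alpha}-a^{\alpha})^{2}+(b^{\alpha}-t^{\alpha})^{2}\right]$; no differences there. Where you genuinely diverge is the sharpness step. The paper takes $f(t)=t^{\alpha}/\alpha$ and, crucially, sets the evaluation point equal to the right endpoint ($t=b$), so that $p(t,\cdot)$ is single-signed on all of $[a,b]$, $D_{\alpha}f\equiv 1$ is smooth, and equality falls out by direct computation of both sides. You instead build the tent function with $D_{\alpha}f=M$ on $[a,t)$ and $-M$ on $(t,b]$, matched to the sign of $p(t,\cdot)$, which forces equality in every step of the chain for an \emph{arbitrary} interior $t$. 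Your version proves more (attainment at each $t$, not just at the endpoint), at the cost of having to address the failure of $\alpha$-fractional differentiability at the peak $s=t$; your two remedies — noting that the integration by parts in the Montgomery identity is done separately on $[a,t]$ and $[t,b]$ so only one-sided derivatives are needed, or mollifying the peak and passing to the limit — are both standard and sound. The paper's choice sidesteps that technicality entirely by never putting the kink in the interior. Both arguments are correct; yours is the stronger statement of sharpness, the paper's is the shorter verification.
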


\begin{proof}
Using Lemma \ref{monty} with $p(t,s)$ defined in \ref{62} we see that
\begin{eqnarray*}
 \left|f(t) - \frac{\alpha}{b^\alpha-a^\alpha} \int_a^b f(s)d_{\alpha}s\right|
  & = & \left|\frac{\alpha}{b^\alpha-a^\alpha} \int_a^b p(t,s)D_{\alpha}f(s)d_{\alpha}s\right| \\
  &\le& \frac{M\alpha}{b^\alpha-a^\alpha} \left(\int_a^t \left|\frac{s^\alpha-a^\alpha}{\alpha}\right| d_{\alpha}s + \int_t^b \left|\frac{s^\alpha-b^\alpha}{\alpha}\right| d_{\alpha}s\right) \\ 
  & = & \frac{M\alpha}{b^\alpha-a^\alpha} \left(\int_a^t \left(\frac{s^\alpha-a^\alpha}{\alpha}\right) d_{\alpha}s + \int_t^b \left(\frac{b^\alpha-s^\alpha}{\alpha}\right) d_{\alpha}s\right) \\
  & = & \frac{M\alpha}{b^\alpha-a^\alpha} \left(\frac 12\left(\frac{s^\alpha-a^\alpha}{\alpha}\right)^2\Big|_{a}^{t} -\frac 12 \left(\frac{b^\alpha-s^\alpha}{\alpha}\right)^2\Big|_{t}^{b} \right) \\
  & = & \frac{M}{2\alpha\left(b^\alpha-a^\alpha\right)} \left[\left(t^\alpha-a^\alpha\right)^2 + \left(b^\alpha-t^\alpha\right)^2 \right].
\end{eqnarray*}
Now $p(t,a) = 0$, so the smallest value attaining the supremum in $M$ is greater than $a$. To prove the sharpness of this inequality, let $f(t) = t^\alpha/\alpha$, $a = t_1$, $b=t_2=t$. It follows that $D_{\alpha}f(t) = 1$ and $M = 1$. Examining the right-hand side of \eqref{63} we get
\[ \frac{M}{2\alpha\left(b^\alpha-a^\alpha\right)} \left[\left(t^\alpha-a^\alpha\right)^2 + \left(b^\alpha-t^\alpha\right)^2 \right] 
 = \frac{\left(t_2^\alpha-t_1^\alpha\right)^2}{2\alpha\left(t_2^\alpha-t_1^\alpha\right)} = \frac{t_2^\alpha-t_1^\alpha}{2\alpha}. \]
Starting with the left-hand side of \eqref{63}, we have
\begin{eqnarray*}
 \left|f(t) - \frac{\alpha}{b^\alpha-a^\alpha} \int_a^b f(t)d_{\alpha}t \right|
 & = & \left|\frac{t^\alpha}{\alpha} - \frac{\alpha}{t_2^\alpha-t_1^\alpha} \int_{t_1}^{t_2}\frac{t^\alpha}{\alpha}d_{\alpha}t \right| \\
 & = & \left|\frac{t^\alpha}{\alpha} - \left(\frac{\alpha}{t_2^\alpha-t_1^\alpha}\right)\left(\frac{t^{2\alpha}}{2\alpha^2}\right)\Big|_{t_1}^{t_2} \right| \\ 
 & = & \left|\frac{t^\alpha}{\alpha} - \left(\frac{1}{t_2^\alpha-t_1^\alpha}\right)\left(\frac{t_2^{2\alpha}-t_1^{2\alpha}}{2\alpha}\right) \right| \\ 
 & = & \left|\frac{t^\alpha}{\alpha} - \left(\frac{t_2^{\alpha}+t_1^{\alpha}}{2\alpha}\right) \right| \\ 
 & = & \frac{t_2^\alpha-t_1^\alpha}{2\alpha}.
\end{eqnarray*}
Therefore by the squeeze theorem the sharpness of Ostrowski's inequality is shown.
\end{proof}


\section{Gr\"{u}ss Inequality}

In this section we prove the Gr\"{u}ss inequality, which relies on Jensen's inequality. Our approach is similar to that taken by \cite{bm2}.


\begin{theorem}[Jensen Inequality]\label{thm71}
Let $\alpha\in(0,1]$ and $a,b,x,y\in[0,\infty)$. If $w:\R\rightarrow\R$ and $g:\R\rightarrow(x,y)$ are nonnegative, continuous functions with $\int_a^b w(t) d_{\alpha}t>0$, and $F:(x,y)\rightarrow\R$ is continuous and convex, then
\[ F\left(\frac{\int_a^b w(t)g(t)d_{\alpha}t}{\int_a^b w(t)d_{\alpha}t}\right) \le \frac{\int_a^b w(t)F(g(t))d_{\alpha}t}{\int_a^b w(t)d_{\alpha}t}. \]
\end{theorem}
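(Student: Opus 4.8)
The plan is to reduce everything to the classical integral Jensen inequality, exploiting the fact that the $\alpha$-fractional integral $\int_a^b h(t)\,d_{\alpha}t = \int_a^b h(t)t^{\alpha-1}\,dt$ is an ordinary integral against the nonnegative weight $t^{\alpha-1}$. Write $\mu := \int_a^b w(t)\,d_{\alpha}t$, which by hypothesis is positive and finite, and set
\[ c := \frac{1}{\mu}\int_a^b w(t)g(t)\,d_{\alpha}t, \]
the weighted mean of $g$. First I would verify that $c\in(x,y)$: since $g$ is continuous on the compact set $[a,b]$, its range is contained in a compact subinterval $[x',y']\subset(x,y)$, and because $w(t)t^{\alpha-1}\ge 0$ has total mass $\mu>0$, the mean $c$ is a weighted average of values of $g$ and therefore also lies in $[x',y']\subset(x,y)$.

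Next I would invoke the supporting-line property of convex functions. Since $F$ is convex on the open interval $(x,y)$ and $c$ is an interior point, $F$ has a subgradient $\beta$ at $c$, namely any real number lying between the left- and right-hand derivatives of $F$ at $c$, for which
\[ F(u) \ge F(c) + \beta(u-c) \qquad\text{for all } u\in(x,y). \]
Substituting $u=g(t)$, which is legitimate because $g(t)\in(x,y)$ for every $t$, yields the pointwise inequality $F(g(t)) \ge F(c) + \beta\,(g(t)-c)$.

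Finally I would multiply this pointwise inequality by $w(t)\ge 0$ and integrate in $d_{\alpha}t$ over $[a,b]$; monotonicity of the $\alpha$-fractional integral, inherited from the nonnegativity of $t^{\alpha-1}$, preserves the direction of the inequality and gives
\[ \int_a^b w(t)F(g(t))\,d_{\alpha}t \ge F(c)\,\mu + \beta\left(\int_a^b w(t)g(t)\,d_{\alpha}t - c\,\mu\right). \]
By the definition of $c$ the bracketed term equals $\mu c - c\mu = 0$, so dividing through by $\mu>0$ produces exactly the asserted inequality. The only genuinely nonroutine ingredients are the membership $c\in(x,y)$ and the existence of the supporting line; both are standard once one observes that the conformable integral acts like an ordinary positive-weighted integral, and I expect the verification that $c$ lands strictly inside the domain $(x,y)$, which is precisely what guarantees the subgradient $\beta$ exists, to be the only step requiring care.
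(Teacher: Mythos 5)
Your proof is correct and is essentially the standard supporting-line (subgradient) argument that the paper itself omits, deferring to Rudin's Theorem 3.3 and Bohner--Peterson's Theorem 6.17, both of which proceed exactly as you do: locate the mean $c$ strictly inside $(x,y)$, take a supporting line for $F$ at $c$, and integrate against the nonnegative weight $w(t)t^{\alpha-1}$. Your added care about why $c$ lies in the open interval (compactness of $g([a,b])$ inside $(x,y)$) is a detail the cited sources also need and is handled correctly.
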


\begin{proof}
The proof is the same as those found in Bohner and Peterson \cite[Theorem 6.17]{bp1} and Rudin \cite[Theorem 3.3]{rudin} and thus is omitted.
\end{proof}


\begin{theorem}[Gr\"{u}ss Inequality]\label{gruss}
Let $a,b,s\in[0,\infty)$, and let $f,g:[a,b]\rightarrow\R$ be continuous functions. Then for $\alpha\in(0,1]$ and
\begin{equation}\label{72}
 m_1 \le f(t) \le M_1, \qquad m_2 \le g(t) \le M_2, 
\end{equation}
we have
\begin{eqnarray*}
 \left|\frac{\alpha}{b^\alpha-a^\alpha}\int_a^b f(t)g(t)d_{\alpha}t - \left(\frac{\alpha}{b^\alpha-a^\alpha}\right)^2 \int_a^b f(t)d_{\alpha}t\int_a^b g(t)d_{\alpha}t\right| \\
 \le \frac 14(M_1-m_1)(M_2-m_2). 
\end{eqnarray*}
\end{theorem}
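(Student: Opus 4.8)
The plan is to recognize the left-hand side as a Čebyšev (Grüss) functional and to bound it by a Cauchy--Schwarz argument whose positivity ingredient is supplied by Jensen's inequality, Theorem \ref{thm71}. Write $K:=\alpha/(b^\alpha-a^\alpha)$, so that $\int_a^b 1\,d_{\alpha}t=(b^\alpha-a^\alpha)/\alpha=1/K$, and set
\[ T(f,g):=K\int_a^b f(t)g(t)\,d_{\alpha}t-K^2\int_a^b f(t)\,d_{\alpha}t\int_a^b g(t)\,d_{\alpha}t. \]
The assertion to be proved is exactly $|T(f,g)|\le\tfrac14(M_1-m_1)(M_2-m_2)$. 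A first useful observation is that $T$ is unchanged when a constant is added to $f$ or to $g$, and that, writing $\bar f:=K\int_a^b f\,d_{\alpha}t$ and $\bar g:=K\int_a^b g\,d_{\alpha}t$ for the $\alpha$-fractional averages, one has the identity $T(f,g)=K\int_a^b(f-\bar f)(g-\bar g)\,d_{\alpha}t$, obtained by expanding the right-hand side and using $K\int_a^b 1\,d_{\alpha}t=1$.

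First I would establish the Cauchy--Schwarz bound $T(f,g)^2\le T(f,f)\,T(g,g)$. For this it suffices to know that $T(h,h)\ge0$ for every continuous $h$, since then the quadratic $\lambda\mapsto T(f-\lambda g,f-\lambda g)=T(f,f)-2\lambda T(f,g)+\lambda^2 T(g,g)$ is nonnegative for all real $\lambda$, and its discriminant must therefore be nonpositive. The nonnegativity of $T(h,h)$ is precisely the content of Jensen's inequality with weight $w\equiv1$ and the convex function $F(u)=u^2$: Theorem \ref{thm71} gives $\bigl(K\int_a^b h\,d_{\alpha}t\bigr)^2\le K\int_a^b h^2\,d_{\alpha}t$, which rearranges to $T(h,h)\ge0$.

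Next I would bound the two diagonal terms. Since $m_1\le f\le M_1$ on $[a,b]$, the integrand $(M_1-f)(f-m_1)$ is nonnegative, so integrating against the positive measure $d_{\alpha}t=t^{\alpha-1}dt$ and multiplying by $K$ yields $K\int_a^b f^2\,d_{\alpha}t\le(M_1+m_1)\bar f-M_1 m_1$. Hence $T(f,f)=K\int_a^b f^2\,d_{\alpha}t-\bar f^{\,2}\le(M_1-\bar f)(\bar f-m_1)$, and since the two factors sum to $M_1-m_1$, the arithmetic--geometric mean inequality gives $T(f,f)\le\tfrac14(M_1-m_1)^2$. The identical argument applied to $g$ gives $T(g,g)\le\tfrac14(M_2-m_2)^2$, and combining these with the Cauchy--Schwarz bound completes the proof.

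The step that requires the most care is the invocation of Jensen's inequality, since Theorem \ref{thm71} is stated for nonnegative functions taking values in an open interval, whereas $h=f-\lambda g$ need not be nonnegative. This is easily circumvented: because $T$ is invariant under adding constants, I may first replace $f$ and $g$ by $f-m_1\ge0$ and $g-m_2\ge0$, and $F(u)=u^2$ is convex on all of $\R$, so the inequality $T(h,h)\ge0$ persists for the $h$ that arise; alternatively one may bypass Jensen entirely and note that $T(h,h)=K\int_a^b(h-\bar h)^2\,d_{\alpha}t\ge0$ directly, since $d_{\alpha}t$ is a positive measure. Beyond this point the argument is routine: the whole proof is the classical Grüss argument carried out for the positive linear functional $f\mapsto\int_a^b f\,d_{\alpha}t$, the only fractional feature being the weight $t^{\alpha-1}$ hidden in $d_{\alpha}t$.
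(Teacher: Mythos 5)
Your proposal is correct, and at the strategic level it is the classical Gr\"{u}ss argument that the paper also follows: bound the diagonal functional $T(f,f)$ by $\tfrac14(M_1-m_1)^2$, do the same for $g$, and combine via the Cauchy--Schwarz inequality $T(f,g)^2\le T(f,f)T(g,g)$. The execution differs in two respects worth noting. For the diagonal bound, the paper first reduces to the zero-mean case ($K\int_a^b f\,d_\alpha t=0$), normalizes via $v=(f-m_1)/(M_1-m_1)\in[0,1]$, and uses $\int v^2\,d_\alpha t\le\int v\,d_\alpha t$ to get $I(f,f)\le -m_1M_1\le\tfrac14(M_1-m_1)^2$, then removes the zero-mean hypothesis by translation invariance; you instead integrate $(M_1-f)(f-m_1)\ge0$ directly to get $T(f,f)\le(M_1-\bar f)(\bar f-m_1)$ and finish with AM--GM, which avoids the case split entirely. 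More significantly, the paper does not actually carry out the passage from the diagonal case to general $f,g$ --- it says the proof can be finished ``as in the case $\alpha=1$'' and points to the Bohner--Matthews reference --- whereas you spell out the positivity $T(h,h)=K\int_a^b(h-\bar h)^2\,d_\alpha t\ge0$ and the resulting discriminant argument. You are also right to flag that Theorem \ref{thm71} (Jensen), as stated for nonnegative $g$, does not directly apply to $h=f-\lambda g$; your direct positivity argument is the clean way around this, and in that respect your write-up is more self-contained than the paper's.
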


\begin{proof}
Initially we consider an easier case, namely where $f=g$ and
\[ \frac{\alpha}{b^\alpha-a^\alpha}\int_a^b f(t)d_{\alpha}t = 0. \]
If we define
\[ v(t):=\frac{f(t)-m_1}{M_1-m_1}\in[0,1], \]
then $f(t)=m_1+(M_1-m_1)v(t)$. Since
\[ \int_a^b v^2(t)d_{\alpha}t \le \int_a^b v(t) d_{\alpha}t = \frac{-m_1(b^\alpha-a^\alpha)}{\alpha(M_1-m_1)}, \]
we have
\begin{eqnarray*}
 I(f,f) &:= & \frac{\alpha}{b^\alpha-a^\alpha}\int_a^b f^2(t) d_{\alpha}t - \left(\frac{\alpha}{b^\alpha-a^\alpha}\int_a^b f(t) d_{\alpha}t\right)^2 \\
        & = & \frac{\alpha}{b^\alpha-a^\alpha}\int_a^b \left[m_1+(M_1-m_1)v(t)\right]^2(t) d_{\alpha}t \\
        &\le& -m_1M_1 = \frac 14\left[(M_1-m_1)^2-(M_1+m_1)^2\right] \\
        &\le& \frac 14(M_1-m_1)^2.
\end{eqnarray*}
Now consider the case
\[ r:=\frac{\alpha}{b^\alpha-a^\alpha}\int_a^b f(t)d_{\alpha}t \not= 0, \]
where $r\in\R$. If we take $h(t):=f(t)-r$, then $h(t)\in[m_1-r,M_1-r]$ and
\[ \frac{\alpha}{b^\alpha-a^\alpha}\int_a^b h(t)d_{\alpha}t = \frac{\alpha}{b^\alpha-a^\alpha}\int_a^b (f(t)-r)d_{\alpha}t = r - \frac{r\alpha}{b^\alpha-a^\alpha}\int_a^b d_{\alpha}t = 0. \]
Consequently $h$ satisfies the earlier assumptions and so
\[ I(h,h) \le \frac 14\left[M_1-r-(m_1-r)\right]^2 = \frac 14\left(M_1-m_1\right)^2. \]
Additionally we have
\[ I(h,h) = \frac{\alpha}{b^\alpha-a^\alpha}\int_a^b (f(t)-r)^2 d_{\alpha}t = -r^2 + \frac{\alpha}{b^\alpha-a^\alpha}\int_a^b f^2(t) d_{\alpha}t = I(f,f). \]
As a result,
\[ I(f,f)= I(h,h) \le \frac 14\left(M_1-m_1\right)^2. \]
Let us now turn to the case involving general functions $f$ and $g$ under assumptions \eqref{72}. Using
\begin{eqnarray*}
  I(f,g) &:=& \frac{\alpha}{b^\alpha-a^\alpha}\int_a^b f(t)g(t) d_{\alpha}t - \left(\frac{\alpha}{b^\alpha-a^\alpha}\right)^2 \int_a^b f(t) d_{\alpha}t \int_a^b g(t) d_{\alpha}t 
\end{eqnarray*}
and the earlier cases, one can easily finish the proof as in the case with $\alpha=1$. See \cite{bm2} for complete details to mimic.
\end{proof}


\begin{cor}
Let $\alpha\in(0,1]$, $a,b,s,t\in[0,\infty)$ and $f:[a,b]\rightarrow\R$ be $\alpha$-fractional differentiable. If $D_{\alpha}f$ is continuous and
\[ m \le D_{\alpha}f(t) \le M, \quad t\in[a,b], \]
then
\begin{eqnarray}
 \left|f(t) - \frac{\alpha}{b^\alpha-a^\alpha} \int_a^b f(s)d_{\alpha}s - \left[\frac{2t^\alpha-a^\alpha-b^\alpha}{2(b^\alpha-a^\alpha)}\right] \left[f(b)-f(a)\right]\right| \nonumber \\
 \le \frac 14\left(\frac{b^\alpha-a^\alpha}{\alpha}\right)(M-m). \label{73}
\end{eqnarray}
for all $t\in[a,b]$.
\end{cor}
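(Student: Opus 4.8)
The plan is to read inequality \eqref{73} as a Gr\"uss-type estimate for the pair of functions $p(t,\cdot)$ and $D_{\alpha}f$, bridging the two by the Montgomery identity. First I would apply Lemma \ref{monty} to rewrite
\[ f(t) - \frac{\alpha}{b^\alpha-a^\alpha}\int_a^b f(s)d_{\alpha}s = \frac{\alpha}{b^\alpha-a^\alpha}\int_a^b p(t,s)D_{\alpha}f(s)d_{\alpha}s, \]
so that, with $p(t,\cdot)$ and $D_{\alpha}f$ playing the roles of the two functions, the quantity inside the absolute value in \eqref{73} is precisely the functional $I(p,D_{\alpha}f)$ appearing in the proof of Theorem \ref{gruss} — provided the subtracted bracketed term coincides with the product correction $\left(\frac{\alpha}{b^\alpha-a^\alpha}\right)^2\int_a^b p(t,s)d_{\alpha}s\int_a^b D_{\alpha}f(s)d_{\alpha}s$.

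Second, I would confirm this coincidence by evaluating the two single integrals. For the derivative, the identity $D_{\alpha}f(s)d_{\alpha}s = f'(s)\,ds$ coming from \eqref{fracshort} gives $\int_a^b D_{\alpha}f(s)d_{\alpha}s = f(b)-f(a)$. For the kernel, splitting at $s=t$ and substituting $u=\frac{s^\alpha-a^\alpha}{\alpha}$ on $[a,t]$ and $v=\frac{s^\alpha-b^\alpha}{\alpha}$ on $[t,b]$ (each giving $du=dv=d_{\alpha}s$) yields
\[ \int_a^b p(t,s)d_{\alpha}s = \frac12\left(\frac{t^\alpha-a^\alpha}{\alpha}\right)^2 - \frac12\left(\frac{b^\alpha-t^\alpha}{\alpha}\right)^2 = \frac{(2t^\alpha-a^\alpha-b^\alpha)(b^\alpha-a^\alpha)}{2\alpha^2}. \]
Multiplying by $\left(\frac{\alpha}{b^\alpha-a^\alpha}\right)^2(f(b)-f(a))$ collapses exactly to $\left[\frac{2t^\alpha-a^\alpha-b^\alpha}{2(b^\alpha-a^\alpha)}\right][f(b)-f(a)]$, so the left-hand side of \eqref{73} is indeed $|I(p,D_{\alpha}f)|$.

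Third, I would supply the bounds required by Theorem \ref{gruss}. The hypothesis delivers $m\le D_{\alpha}f\le M$ directly, so the oscillation of the second function is $M-m$. For $p(t,\cdot)$, on $[a,t)$ it rises from $0$ to $\frac{t^\alpha-a^\alpha}{\alpha}$ while on $[t,b]$ it rises from $\frac{t^\alpha-b^\alpha}{\alpha}$ back up to $0$; hence its supremum is $\frac{t^\alpha-a^\alpha}{\alpha}$ and its infimum is $\frac{t^\alpha-b^\alpha}{\alpha}$, giving the clean oscillation $\frac{b^\alpha-a^\alpha}{\alpha}$. Feeding these into the Gr\"uss bound $\frac14(M_1-m_1)(M_2-m_2)$ produces exactly $\frac14\left(\frac{b^\alpha-a^\alpha}{\alpha}\right)(M-m)$, which is \eqref{73}.

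The main obstacle is technical rather than conceptual: $p(t,\cdot)$ has a jump discontinuity at $s=t$, whereas Theorem \ref{gruss} is stated for continuous functions. I would resolve this by observing that the proof of Theorem \ref{gruss} uses only the uniform two-sided bounds and integrability of its arguments, both of which $p(t,\cdot)$ possesses as a bounded, piecewise-continuous function; alternatively, one can approximate $p(t,\cdot)$ uniformly away from a shrinking neighborhood of $s=t$ by continuous functions and pass to the limit, since the relevant integrals depend continuously on such a perturbation.
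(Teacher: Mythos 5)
Your proposal is correct and follows essentially the same route as the paper: Montgomery's identity to express the deviation as $\frac{\alpha}{b^\alpha-a^\alpha}\int_a^b p(t,s)D_{\alpha}f(s)\,d_{\alpha}s$, the pointwise bounds $\frac{t^\alpha-b^\alpha}{\alpha}\le p(t,s)\le\frac{t^\alpha-a^\alpha}{\alpha}$, an application of the Gr\"uss inequality to the pair $p(t,\cdot)$ and $D_{\alpha}f$, and the same evaluation of the two single integrals. Your extra remark about the jump discontinuity of $p(t,\cdot)$ at $s=t$ is a point the paper silently ignores, and your resolution (the Gr\"uss proof only needs boundedness and integrability, or approximate and pass to the limit) is sound.
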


\begin{proof}
Using Lemma \ref{monty} Montgomery's identity we have
\begin{equation}\label{fpmont}
 f(t) - \frac{\alpha}{b^\alpha-a^\alpha} \int_a^b f(s)d_{\alpha}s = \frac{\alpha}{b^\alpha-a^\alpha} \int_a^b p(t,s)D_{\alpha}f(s)d_{\alpha}s 
\end{equation}
for all $t\in[a,b]$, where $p(t,s)$ is given in \eqref{62}. Now for all $t,s\in[a,b]$, we see that
\[ \frac{t^\alpha-b^\alpha}{\alpha} \le p(t,s) \le \frac{t^\alpha-a^\alpha}{\alpha}. \]
Applying Theorem \ref{gruss} Gr\"{u}ss' inequality to the mappings $p(t,\cdot)$ and $D_{\alpha}f$, we obtain
\begin{eqnarray}
 \left|\frac{\alpha}{b^\alpha-a^\alpha}\int_a^b p(t,s)D_{\alpha}f(s)d_{\alpha}s - \left(\frac{\alpha}{b^\alpha-a^\alpha}\right)^2 \int_a^b p(t,s)d_{\alpha}s \int_a^b D_{\alpha}f(s)d_{\alpha}s\right| \nonumber \\
 \le \frac 14\left(\frac{t^\alpha-a^\alpha}{\alpha}-\frac{t^\alpha-b^\alpha}{\alpha} \right)(M-m) = \frac 14\left(\frac{b^\alpha-a^\alpha}{\alpha}\right)(M-m). \label{74}
\end{eqnarray}
Computing the integrals involved, we obtain
\begin{eqnarray*}
 \left(\frac{\alpha}{b^\alpha-a^\alpha}\right)^2 \int_a^b p(t,s)d_{\alpha}s = \frac{2t^\alpha-a^\alpha-b^\alpha}{2(b^\alpha-a^\alpha)}
\end{eqnarray*}
and
\begin{eqnarray*}
 \int_a^b D_{\alpha}f(s)d_{\alpha}s = f(b)-f(a),
\end{eqnarray*}
so that \eqref{73} holds, after using \eqref{fpmont} and \eqref{74}.
\end{proof}

Compare the following corollary with Corollary \ref{c10} and Corollary \ref{hhII}.


\begin{cor}[Hermite-Hadamard III]
Let $\alpha\in(0,1]$, $a,b,s,t\in[0,\infty)$ and $f:[a,b]\rightarrow\R$ be $\alpha$-fractional differentiable. If $D_{\alpha}f$ is continuous and
\[ m \le D_{\alpha}f(t) \le M, \quad t\in[a,b], \]
then
\[ \left|\frac{f(b)+f(a)}{2} - \frac{\alpha}{b^\alpha-a^\alpha} \int_a^b f(s)d_{\alpha}s \right| \le \frac 14\left(\frac{b^\alpha-a^\alpha}{\alpha}\right)(M-m). \]
for all $t\in[a,b]$.
\end{cor}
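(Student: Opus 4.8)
The plan is to specialize the preceding corollary, whose bound \eqref{73} holds uniformly for every $t\in[a,b]$, to a single well-chosen endpoint. The only structural difference between \eqref{73} and the desired inequality is the presence of the free term $f(t)$ together with the $t$-dependent weight $\frac{2t^\alpha-a^\alpha-b^\alpha}{2(b^\alpha-a^\alpha)}$ multiplying $f(b)-f(a)$; the right-hand bound $\frac14\left(\frac{b^\alpha-a^\alpha}{\alpha}\right)(M-m)$ is already exactly the target bound and carries no $t$ dependence. Hence the entire task reduces to choosing $t$ so that the expression inside the absolute value on the left of \eqref{73} collapses to $\frac{f(a)+f(b)}{2}$ minus the fractional average $\frac{\alpha}{b^\alpha-a^\alpha}\int_a^b f(s)\,d_\alpha s$.

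First I would set $t=a$ in \eqref{73}. At this value the weight evaluates to
\[ \frac{2a^\alpha-a^\alpha-b^\alpha}{2(b^\alpha-a^\alpha)}=\frac{a^\alpha-b^\alpha}{2(b^\alpha-a^\alpha)}=-\frac12, \]
so that the quantity inside the absolute value in \eqref{73} becomes
\[ f(a)-\frac{\alpha}{b^\alpha-a^\alpha}\int_a^b f(s)\,d_\alpha s+\frac12\bigl[f(b)-f(a)\bigr]. \]
The terms $f(a)+\frac12[f(b)-f(a)]$ combine to $\frac{f(a)+f(b)}{2}$, which is precisely the left-hand side of the claimed inequality, while the right-hand side of \eqref{73} is inherited verbatim. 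This establishes the corollary.

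I expect no genuine obstacle, as the result is a direct specialization followed by a single algebraic simplification. The only point worth recording is that the substitution $t=a$ is legitimate exactly because \eqref{73} was proved for all $t\in[a,b]$, so an endpoint value is admissible. As a consistency check one may instead take $t=b$: the weight then equals $+\tfrac12$, the terms rearrange as $f(b)-\frac12[f(b)-f(a)]=\frac{f(a)+f(b)}{2}$, and the identical conclusion follows, confirming that either endpoint yields the stated Hermite--Hadamard inequality.
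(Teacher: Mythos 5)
Your proposal is correct and is essentially the paper's own argument: the paper simply takes $t=b$ in the preceding corollary, which is the consistency check you record at the end, while your primary choice $t=a$ works identically since the weight evaluates to $-\tfrac12$ and the terms combine to $\tfrac{f(a)+f(b)}{2}$. No gap to report.
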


\begin{proof}
Take $t=b$ in the previous corollary.
\end{proof}

\end{document}